\newcommand{\noun}[1]{\textsc{#1}}
\numberwithin{equation}{section}
\numberwithin{figure}{section}
\theoremstyle{plain}
\newtheorem{thm}{\protect\theoremname}
  \theoremstyle{definition}
  \theoremstyle{plain}
  \newtheorem{lem}[thm]{\protect\lemmaname}
  \theoremstyle{plain}
  \theoremstyle{remark}
  \providecommand{\definitionname}{Definition}
  \providecommand{\lemmaname}{Lemma}
  \providecommand{\propositionname}{Proposition}
  \providecommand{\remarkname}{Remark}
\providecommand{\theoremname}{Theorem}
\def\p{\partial}
\def\e{\varepsilon}
\def\Ni{N_{\rm in}}
\def\No{N_{\rm out}}
\def\nui{\nu_{\rm in}}
\def\nuo{\nu_{\rm out}}
\def\Pin{\mathsf{P}_{\mathrm{in}}}
\def\Pout{\mathsf{P}_{\mathrm{out}}}
\def\bea{\begin{eqnarray}}
\def\eea{\end{eqnarray}}
\def\beastar{\begin{eqnarray*}}
\def\eeastar{\end{eqnarray*}}
\begin{document}

\title{Hardy Spaces and Boundary Conditions from the Ising Model}

\author{Cl\'{e}ment Hongler and Duong H. Phong}
\address{Department of Mathematics, Columbia University. 2990 Broadway, New
York, NY 10027, USA.}

\email{hongler@math.columbia.edu}
\email{phong@math.columbia.edu}

\begin{abstract}

Functions in Hardy spaces on multiply-connected domains
in the plane are given an explicit characterization in terms of a boundary condition inspired by the two-dimensional
Ising model. The key underlying property is the positivity of a certain
operator constructed inductively on the number of components of the boundary.

\end{abstract}

\maketitle

\section{Introduction}

A remarkable property of critical phenomena in two dimensions is their local conformal invariance. This has resulted in a rich interaction between statistical
physics and many branches of mathematics, including probability, complex analysis, Riemann surfaces, and infinite-dimensional Lie algebras (see e.g.
\cite{BPZ, DJKM, DiF, DP, Sch07, Smi06, Smi10b} and references therein).

\smallskip

The goal of the present paper is to show how ideas from two-dimensional statistical
physics can help answering an important question in complex analysis,
namely how to explictly characterize the boundary values of holomorphic functions on a smooth multiply-connected domain $\Omega$. See \cite{Fis} for an elementary introduction to analytic function spaces on planar domains.

\smallskip
The simply-connected case is well-known. In this case, $\Omega$ can be assumed to be the unit disk $\mathbb{D}$ by the Riemann mapping theorem. 
The space of holomorphic functions admitting $L^2$ boundary values is the  
Hardy space $H^2(\mathbb{D})$, and their boundary values can be characterized by the condition that all their Fourier coefficients of negative index vanish
(see e.g. \cite{Z}). The projection operator on the space of $L^2$ boundary values of holomorphic functions
is given in terms of the Hilbert transform, which is the primary example of a singular integral operator.
Underlyingl this is the basic fact that, for any real-valued $L^2$ function
$f$ on $\p\mathbb{D}$, there is a single-valued holomorphic function $F$ on $\mathbb{D}$ with $f$ as its real part on $\p\mathbb{D}$.

\smallskip
In general, there is no such function $F$ if $\Omega$ is multiply-connected (see \cite{EaMa}, for instance). However, a natural remedy to this 
is the following boundary value problem, which (as we will show) always possesses a unique solution:
\begin{eqnarray}
\label{boundarycondition}
F \ \ {\rm holomorphic} \ \ {\rm on}\ \ \Omega,
\qquad
\Im \mathfrak{m} ((F-f)\nu^{{1\over 2}})=0
\ \ {\rm on}\ \ \p\Omega
\end{eqnarray}
where $\nu$ is the outer normal to the boundary, viewed as a complex number. This boundary value problem is suggested by recent studies of the  Ising model \cite{Smi10a, H, HS}. An $n$-connected domain $\Omega$ in the plane admits $2^{n-1}$ different spin structures. 
Since $\Omega$ admits a global frame, one of the spin structures, which we refer to as the trivial
one, can be identified with scalar single-valued functions. We shall show that the boundary value problem
(\ref{boundarycondition}) can always be solved for the trivial
spin structure. This will give a simple explicit characterization of $L^2$ boundary values of holomorphic functions.
In the process, we shall also find an analogue, for each of the spin structures, of the Hilbert transform
for multiply-connected domains. 

In this paper, we provide a functional-analytic proof of the existence and uniqueness of the solution to the problem \ref{boundarycondition}.

\smallskip
Our result can in principle be applied to the study of the Schramm's-SLE
curves SLE$\left(3\right)$ and SLE$\left(16/3\right)$ in finitely-connected geometries, for which
solutions of the boundary value problem (\ref{boundarycondition}) are crucial \cite{Izyurov}.

\section{Statement of the main results}

Let $\Omega$ be a bounded domain in $\mathbb{R}$ with smooth boundary $\p\Omega$. Let $\Ni$ and $\No$ be the unit inward and outward pointing normals
to the boundary $\p\Omega$. They can be identified with complex numbers
$\nui$ and $\nuo$ by writing,
\bea
\Ni=2\Re \mathfrak{e} (\nui{\p\over\p z}),
\qquad
\No=2\Re  \mathfrak{e} (\nuo{\p\over\p z}),
\eea
with $\nui=-\nuo$, $|\nui|=|\nuo|=1$. 

\medskip

Let $L^{2}\left(\partial\Omega\right)$
denote the space of $L^{2}$ complex-valued functions defined on $\partial\Omega$.
Even though this is clearly a (complex) Hilbert space, we will most
of the time view it as a \textbf{real} Hilbert space.
Define the (real-linear) projection operators 
$\mathsf{P}_{\mathrm{in}}:L^{2}\left(\partial\Omega\right)\to L^{2}\left(\partial\Omega\right)$
and $\mathsf{P}_{\mathrm{out}}:L^{2}\left(\partial\Omega\right)\to L^{2}\left(\partial\Omega\right)$ by
\begin{eqnarray*}
\mathsf{P}_{\mathrm{in}}\left[f\right]\left(z\right) & := & \frac{1}{2}\left(f\left(z\right)+\overline{\nu_{\mathrm{in}}}\left(z\right)\overline{f}\left(z\right)\right),\\
\mathsf{P}_{\mathrm{out}}\left[f\right]\left(z\right) & := & \frac{1}{2}\left(f\left(z\right)+\overline{\nu_{\mathrm{out}}}\left(z\right)\overline{f}\left(z\right)\right),
\end{eqnarray*}
Observe that the orthogonal projection
$\mathrm{Proj}_{e^{i\theta}\mathbb{R}}$
of $\mathbb{C}$
on the real line $e^{i\theta}\mathbb{R}$ in $\mathbb{C}$ can be expressed as
$\mathrm{Proj}_{e^{i\theta}\mathbb{R}}\left(\zeta\right)=\frac{1}{2}\left(\zeta+e^{2i\theta}\overline{\zeta}\right),\ \ \forall \zeta\in\mathbb{C},\forall\theta\in\mathbb{R}$.
Thus $\Pin[f](z)$ and $\Pout[f](z)$ are just the projections
of the complex number $f(z)$ on the two perpendicular lines in ${\bf C}$
defined by $\nui^{-{1\over 2}}(z)\mathbb{R}$ and $\nuo^{-{1\over 2}}(z)\mathbb{R}$.
As such, they are just twisted versions of the projections of complex numbers onto their real and imaginary parts. They provide
a simple way of formulating
boundary conditions of the form (\ref{boundarycondition}), e.g.,
\bea
\Im \mathfrak{m} (f(z)\nui^{{1\over 2}}(z))=0
\ \Leftrightarrow
\ \Pin[f](z)=0,
\qquad
z\in \p\Omega.
\eea
Clearly $\Pin^2=\Pin$, $\Pout^2=\Pout$,
and $\Pin+\Pout=\mathbf{Id}$. We can now define the real Hilbert subspaces
$L_{\rm in}^2(\p\Omega)$ and $L_{\rm out}^2(\p\Omega)$ by
\bea
&&
L_{\rm in}^2(\p\Omega)=Ker(\Pout)=Range(\Pin),
\\
&&
L_{\rm out}^2(\p\Omega)=Ker(\Pin)=Range(\Pout),
\eea
and we have the direct-sum decomposition,
\bea
L^2(\p\Omega)
=
L_{\rm in}^2(\p\Omega)\oplus L_{\rm out}^2(\p\Omega).
\eea

\smallskip
Let $H^2(\Omega)$ be the Hardy space of holomorphic functions on $\Omega$. It can be defined in several ways, and one way is as the Banach space of holomorphic functions $F(z)$ on $\Omega$ satisfying
\bea
{\rm sup}_{0<\delta<<1}
\int_{\p\Omega_\delta}|F(z)|^2 d\sigma(z)
<\infty,
\eea
where $\Omega_\delta$ is the subset of $\Omega$ consisting of points at a distance $>\delta$ from $\p\Omega$. For $\delta$ sufficiently small,
the orthogonal projection of $\p\Omega_\delta$ on $\p\Omega$ is a diffeomorphism, and functions on $\p\Omega_\delta$ can be identified with functions on $\p\Omega$. What is important for our purposes is the fact that for each function $F\in H^2(\Omega)$, the restrictions of $F$ to $\p\Omega_\delta$, viewed in this way as functions on $\p\Omega$, converge in $L^2(\p\Omega)$ and pointwise a.e. to
a function $R_{\p\Omega}F$. The ``restriction operator'' $R_\Omega$
is a bounded, injective, operator from $H^2(\Omega)$ to $L^2(\Omega)$. 

\medskip
Then our main result can be formulated as follows:

\begin{thm}
\label{10}
Let $\Omega$ be a bounded domain with smooth boundary. Then for any function
$f\in L^2(\p\Omega)$, there exists a unique function $F\in H^2(\Omega)$
satisfying the boundary condition
\bea
\Pin(R_\Omega(F)-f)=0.
\eea
\end{thm}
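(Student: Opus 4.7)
The plan is to view
\[
T: H^2(\Omega) \longrightarrow L^2_{\mathrm{in}}(\p\Omega),
\qquad
T(F):=\Pin(R_\Omega F),
\]
as a bounded real-linear operator between real Hilbert spaces and to prove that it is a bijection; uniqueness is the injectivity of $T$ and existence is its surjectivity.

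For uniqueness, if $\Pin(R_\Omega F)=0$, then equivalently $F\nui^{1/2}$ is real-valued on $\p\Omega$, hence $F^2\nui=(F\nui^{1/2})^2\ge 0$ there. Since $F\in H^2(\Omega)$ implies $F^2\in H^1(\Omega)$, Cauchy's theorem on the shrunken domains $\Omega_\delta$ combined with the $L^1$ convergence of boundary restrictions gives $\int_{\p\Omega}F^2\,dz=0$. Orienting each component of $\p\Omega$ so that $\Omega$ lies on the left, a direct check gives the identity $\nui\,d\sigma=i\,dz$, and therefore $\int_{\p\Omega}F^2\nui\,d\sigma=0$. Non-negativity of the integrand forces $F\nui^{1/2}=0$ almost everywhere, so $R_\Omega F=0$, and the injectivity of $R_\Omega$ gives $F=0$.

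For existence, surjectivity of $T$ will follow from a coercive lower bound $\|TF\|_{L^2}\ge c\|R_\Omega F\|_{L^2}$ (producing closed range) together with density of $\mathrm{Range}(T)$ in $L^2_{\mathrm{in}}(\p\Omega)$. Both properties I would obtain as consequences of the positivity of a self-adjoint form $A$ on $H^2(\Omega)$ assembled from $R_\Omega$, $\Pin$ and the Cauchy reproducing kernel --- this is the operator ``constructed inductively on the number of components'' promised by the abstract. The positivity of $A$ is established by induction on the number $n$ of components of $\p\Omega$. In the base case $n=1$, the Riemann mapping theorem reduces the claim to $\mathbb{D}$, where, after the unitary twist by $\nui^{1/2}$, it becomes the classical fact that the Fourier coefficients of non-negative index determine a unique function in $H^2(\mathbb{D})$ with the prescribed ``in'' projection. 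For the inductive step, I would cap off one boundary component $\gamma_j$ to obtain a domain $\widetilde\Omega$ with $n-1$ components, and split an arbitrary $F\in H^2(\Omega)$ via a Laurent-type decomposition near $\gamma_j$ into a piece in $H^2(\widetilde\Omega)$ plus a remainder holomorphic on an annulus around $\gamma_j$. The form $A_\Omega$ then decomposes as $A_{\widetilde\Omega}$ plus a cross-term localized at $\gamma_j$; the inductive hypothesis handles $A_{\widetilde\Omega}$, and the cross-term must be estimated directly.

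The main obstacle is exactly this inductive control of the cross-term, which has to be shown non-negative (or at least dominated by the main term) without destroying the strict positivity gained from the inductive hypothesis. This is where the specific algebraic form of $\Pin$ enters in a crucial way: the cross-term simplifies only because $F^2\nui$ descends to a well-defined single-valued function on $\p\Omega$, which is the same structural feature underlying the uniqueness argument. Once positivity of $A$ is in hand, closed range plus density give surjectivity of $T$, completing the proof.
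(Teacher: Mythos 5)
Your uniqueness argument matches the paper's exactly: writing the boundary value as $\rho\,\nu_{\mathrm{in}}^{-1/2}$, using Cauchy's theorem on $\Omega_\delta$ to get $\oint_{\partial\Omega}F^2\,dz=0$, and noting that the real part of the integrand is a non-negative density. Your broad outline for surjectivity — induction on the number $n$ of boundary components, base case by Riemann mapping onto the disk and Fourier analysis, inductive step by adding one boundary component — is also the structure the paper uses (Lemmas \ref{lem:simply-connected-case}, \ref{lem:Riemann-mapping-theorem}, and \ref{lem:localization-in-disk-complement}).

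However, there is a genuine gap at the crux of the inductive step, and you flag it yourself. Two specific problems:

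First, the step ``positivity of a self-adjoint form $A$ gives a coercive lower bound $\|TF\|\ge c\|R_\Omega F\|$ plus density of the range'' is not an argument; a positive \emph{semi}-definite form does not yield coercivity, and the form you can actually extract from the Cauchy-theorem identity (which is what the paper computes) is only semi-definite, with a large kernel. The invertibility in the paper does not come from coercivity of $T$ alone but from a much more specific algebraic structure: after conjugating $T_\Xi^{\mathbb{S}}$ by an explicit operator $\Phi$ built from the inverse $S_\Omega=T_\Omega^{-1}$ furnished by the inductive hypothesis, and by the Fourier transform on the added circle, one obtains $\mathcal{O}=\mathrm{Id}+\mathcal{Q}$, and the quadratic form $\langle c,\mathcal{Q}c\rangle$ is identified with $\Re\mathfrak{e}\bigl(\frac{1}{2\pi i}\oint_{\mathbb{S}}f^2\,dz\bigr)\ge 0$ by the same contour argument you use for uniqueness. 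It is the perturbation-of-identity structure $\mathrm{Id}+\mathcal{Q}$, not bare positivity of $\mathcal{Q}$, that yields a bounded inverse: $\mathcal{O}\mathcal{O}^T=\mathrm{Id}+(\mathcal{Q}+\mathcal{Q}^T)+\mathcal{Q}\mathcal{Q}^T\ge \mathrm{Id}$.

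Second, your proposed decomposition $A_\Omega = A_{\widetilde\Omega} + (\text{cross-term near }\gamma_j)$ is not how the operator is assembled in the paper, and it is not clear how you would split the form so that the inductive hypothesis controls one piece. The paper instead uses the inductive hypothesis \emph{inside the construction of the test family}: the functions $\varphi_k^{\Re}=z^k-S_\Omega(\Pin[\zeta^k])$, $\varphi_k^{\Im}=iz^k-S_\Omega(\Pin[i\zeta^k])$ for $k<0$ are built directly from $S_\Omega$, and their role is to make the composite $\mathcal{F}_-\circ T_\Xi^{\mathbb{S}}\circ\Phi$ a compact-free perturbation of the identity. Without some concrete version of this construction, the inductive step remains a plan rather than a proof. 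You have correctly identified that the contour-integral identity for $F^2\nu$ is the structural input that makes everything work, but the mechanism that turns that positivity into invertibility is the missing ingredient.
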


This theorem is essentially equivalent to another theorem, which 
is actually the one that we shall prove first. Let the operators $T_\Omega:H^2(\Omega)\to L_{\rm in}^2(\p\Omega)$
and $U_\Omega:H^2(\Omega)\to L_{\rm out}^2(\p\Omega)$ be defined by
\bea
T_\Omega:=\Pin\circ R_{\p\Omega},
\qquad
U_\Omega:=\Pout\circ R_{\p\Omega}.
\eea
It is useful to depict this graphically as
\[
\begin{aligned} &  & H^{2}\left(\Omega\right)\\
 & \swarrow_{T_{\Omega}} & \downarrow R_{\partial\Omega} & \searrow_{U_{\Omega}}\\
L_{\mathrm{in}}^{2}\left(\partial\Omega\right) & \leftarrow & L^{2}\left(\partial\Omega\right) & \rightarrow & L_{\mathrm{out}}^{2}\left(\partial\Omega\right)
\end{aligned}
\]

\smallskip

\begin{thm}
\label{2}
Let $\Omega$ be a bounded domain with smooth boundary. Then

\rm{(a)} the operators $T_\Omega:H^2(\Omega)\to L_{\rm in}^2(\p\Omega)$
and $U_\Omega:H^2(\Omega)\to L_{\rm out}^2(\p\Omega)$ are real-linear isomorphisms. 

\rm{(b)} Define the operator $W_\Omega: L_{\rm in}^2(\p\Omega)
\to L_{\rm out}^2(\p\Omega)$ by
\bea
W_\Omega=U_\Omega \circ T_\Omega^{-1}.
\eea
Then $W$ is a one-to-one and onto operator satisfying
\begin{eqnarray*}
W_{\Omega}\circ\left(\mathbf{j}\right)\circ W_{\Omega} & = & -\mathrm{Id}\\
\left(-\mathbf{j}\right)\circ W_{\Omega}\circ\left(\mathbf{j}\right) & = & T_{\Omega}\circ U_{\Omega}^{-1},
\end{eqnarray*}
where $\mathbf{j}$ denotes $i\cdot\mathrm{Id}$ (multiplication by
$i$ operator). 
\end{thm}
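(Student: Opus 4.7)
The plan is to handle injectivity first by a short application of Cauchy's theorem, then tackle surjectivity---the main obstacle---by induction on the number of boundary components using positivity of an auxiliary operator, and finally deduce the algebraic relations in (b) by a direct calculation exploiting the fact that multiplication by $i$ interchanges $L_{\rm in}^{2}(\p\Omega)$ and $L_{\rm out}^{2}(\p\Omega)$.

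For the injectivity of $T_\Omega$, I would argue as follows. Suppose $F\in H^{2}(\Omega)$ satisfies $T_\Omega F=0$, so its boundary trace lies in $L_{\rm out}^{2}(\p\Omega)=\nuo^{-1/2}\mathbb{R}$. Then $F^{2}\nuo=(F\nuo^{1/2})^{2}\geq 0$ pointwise on $\p\Omega$. Cauchy's theorem applied to $F^{2}\in H^{1}(\Omega)$ gives $\oint_{\p\Omega}F^{2}\,dz=0$, and using $dz=i\nuo\,d\sigma$ (with $\p\Omega$ oriented so that $\Omega$ lies to the left on every component) one obtains $\int_{\p\Omega}F^{2}\nuo\,d\sigma=0$. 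Since the integrand is nonnegative it must vanish a.e., forcing $F=0$ on $\p\Omega$ and hence $F\equiv 0$ by injectivity of $R_{\p\Omega}$ on $H^{2}(\Omega)$. The identical argument with $\nui$ in place of $\nuo$ handles $U_\Omega$.

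The surjectivity of $T_\Omega$ (equivalently of $U_\Omega$) is the heart of the theorem. I would prove it by induction on the number $n$ of connected components of $\p\Omega$. The base case $n=1$ reduces via the Riemann mapping theorem to the unit disk, where surjectivity is the classical statement that the harmonic-conjugate/Hilbert-transform construction produces an element of $H^{2}(\mathbb{D})$ with prescribed ``projected'' boundary data. For the inductive step, I would cap off one inner boundary component of $\Omega$ to produce an $(n-1)$-connected domain $\tilde\Omega\supset\Omega$ together with a disk $D$ glued along that circle, apply the inductive hypothesis separately on $\tilde\Omega$ and $D$, and reduce the construction of the desired $F\in H^{2}(\Omega)$ to solving a boundary equation on the common circle. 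Its solvability will come down to inverting an auxiliary operator built from the $W$-operators attached to $\tilde\Omega$ and $D$; as the abstract emphasizes, this auxiliary operator is \emph{positive}, and coercivity then delivers invertibility. I expect the verification of positivity to be the principal difficulty, since it will require expressing the operator as a Gram-type form involving the Cauchy transform and invoking the already-established injectivity on the filled-in domain to conclude strict positivity.

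For part (b), once (a) is in hand the map $W_\Omega$ is automatically a real-linear bijection, and the algebraic relations are essentially formal. Given $g\in L_{\rm in}^{2}(\p\Omega)$, set $F=T_\Omega^{-1}g$, so that $R_{\p\Omega}(F)=g+W_\Omega g$ with the two summands in $L_{\rm in}^{2}$ and $L_{\rm out}^{2}$ respectively. The function $iF$ still lies in $H^{2}(\Omega)$, and since multiplication by $i$ swaps $L_{\rm in}^{2}$ and $L_{\rm out}^{2}$, the decomposition $R_{\p\Omega}(iF)=(iW_\Omega g)+(ig)$ has its first summand in $L_{\rm in}^{2}$ and its second in $L_{\rm out}^{2}$. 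Reading off $T_\Omega(iF)=\mathbf{j}W_\Omega g$ and $U_\Omega(iF)=\mathbf{j}g$ and applying $W_\Omega=U_\Omega\circ T_\Omega^{-1}$ yields the first identity after invoking $\mathbf{j}^{2}=-\mathrm{Id}$; the second identity is obtained in exactly the same way, starting from $h\in L_{\rm out}^{2}$ and $F'=U_\Omega^{-1}h$ and tracing the decomposition of $iF'|_{\p\Omega}$.
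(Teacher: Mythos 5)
Your proposal matches the paper's strategy closely: same injectivity argument via Cauchy's theorem and pointwise positivity of $F^{2}\nuo$ on $\p\Omega$, same inductive framework for surjectivity (Riemann mapping theorem for $n=1$, capping one inner boundary component by a disk for the inductive step, reducing to invertibility of an auxiliary operator on the circle), and a direct algebraic verification for part (b). The injectivity paragraph and the part (b) paragraph are essentially complete and correct. Two remarks on part (b): first, the paper itself gives no proof of (b), so your direct computation is a genuine contribution; second, your computation actually establishes $W_{\Omega}\circ\mathbf{j}\circ W_{\Omega}=\mathbf{j}$ (as a map $L_{\rm in}^{2}\to L_{\rm out}^{2}$), and one then obtains $\mathbf{j}\circ W_{\Omega}\circ\mathbf{j}\circ W_{\Omega}=-\mathrm{Id}$ by composing with $\mathbf{j}$; the identity as printed in the theorem, $W_{\Omega}\circ\mathbf{j}\circ W_{\Omega}=-\mathrm{Id}$, is type-incoherent (the left side maps $L_{\rm in}^{2}\to L_{\rm out}^{2}$) and appears to be a typographical slip for the version with the extra $\mathbf{j}$. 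You should state explicitly which corrected form you are proving rather than asserting that ``$\mathbf{j}^{2}=-\mathrm{Id}$'' bridges the gap.

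Where your sketch of the induction step is materially off is the mechanism behind the positivity. The paper's auxiliary operator is $\mathcal{O}=\mathrm{Id}+\mathcal{Q}$ acting on $\ell_{-}^{2}(\mathbb{Z})$ (identified with $L_{\rm in}^{2}(\mathbb{S})$), where $\mathcal{Q}$ is built from $T_{\tilde\Omega}^{-1}$ and Fourier projections on $\mathbb{S}$, and the key fact is only that $\mathcal{Q}$ is positive \emph{semi}-definite; this is proved not by a Gram-type identity nor by invoking injectivity, but by a contour-deformation argument in the same spirit as your injectivity proof: $(c)\bullet_{\mathbb{R}}\mathcal{Q}(c)=\Re\mathfrak{e}\bigl(\frac{1}{2\pi i}\oint_{\mathbb{S}}f^{2}dz\bigr)$ for $f=\Phi(c)$, and after deforming $\mathbb{S}$ to $\p\tilde\Omega$ the boundary condition forces the integrand to be pointwise nonnegative. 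Semi-definiteness alone does not directly give invertibility of $\mathrm{Id}+\mathcal{Q}$ because $\mathcal{Q}$ is not symmetric; the paper closes the gap by observing $\mathcal{O}\mathcal{O}^{T}=\mathrm{Id}+(\mathcal{Q}+\mathcal{Q}^{T})+\mathcal{Q}\mathcal{Q}^{T}\geq\mathrm{Id}$, hence coercive, combined with injectivity of $\mathcal{O}$ (which also follows from $\mathcal{Q}\geq0$). Your sketch should also incorporate the superposition step (the paper's Lemma \ref{lem:superposition}), which reduces inverting $T_{\Omega}$ on all of $\p\Omega$ to inverting each $T_{\Omega}^{\p_{j}\Omega}$ on the single component $\p_{j}\Omega$; without it, the ``boundary equation on the common circle'' does not on its own reconstruct the full boundary data.
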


\medskip
To see how Theorem \ref{10} follows from Theorem \ref{2},
it suffices to observe that, if $T_\Omega^{-1}$ 
exists, then for any function $f\in L^2(\p\Omega)$,
the function $F=T_\Omega^{-1}(\Pin(f))$ satisfies the desired property.

\smallskip

Part (b) of Theorem {2} can be easily seen by tracing back the definitions
of the operators $T_\Omega$ and $U_\Omega$, once Part (a) has been
proved.
Thus we shall henceforth concentrate on the proof of Part (a) of Theorem
\ref{2}.

\medskip

The spaces $L_{\rm in}^2(\p\Omega)$ and $L_{\rm out}^2(\p\Omega)$
can be viewed as the analogues, for multiply-connected domains,
 of the spaces of $L^2$ functions with
only non-vanishing Fourier coefficients of respectively positive and negative
indices in the case of the unit disk.
The operator $W_{\Omega}$ is a variant of the Hilbert transform,
for the twisted line bundle $\nu_{\mathrm{out}}^{-\frac{1}{2}}(z)\mathbb{R}$
on the boundary of $\Omega$. 

\medskip
As we had stressed in the introduction, the analogue of the preceding theorem would fail if the projection operators $\Pin$ and $\Pout$ were
replaced by the projections on the trivial line bundles $\mathbb{R}\times\p\Omega$ and $i\mathbb{R}\times\p\Omega$ on the boundary of $\Omega$. In this case, 
there would have been an obstruction
of  a non-trivial finite-dimensional subspace. That this difficulty could be eliminated by considering boundary conditions of the form
(\ref{boundarycondition}) is a key insight provided by recent advances in the study of the critical Ising model. There is a discrete variant of the above theorems can be established explicitly for discrete fermions (see \cite{HK}, Section 13).

\section{Proof of Theorem \ref{2}}

We begin by proving the injectivity of the operators
$T_\Omega$ and $U_\Omega$.

\medskip

Let $F\in H^2(\Omega)$, and assume that $T_\Omega(F)=0$. If we set
$f=R_\Omega(F)$, this means that $f=\rho(z)\nui^{-{1\over 2}}(z)$
for some positive scalar function $\rho(z)$ on the boundary. We claim that
$\rho(z)=0$ identically, and hence $F=0$ identically in $\Omega$. Indeed, the holomorphicity of $F$ implies
\bea
\oint_{\p\Omega_\delta}F^2(z) dz=0
\eea
for all $0<\delta$ sufficiently small. But the convergence of the restrictions
of $F(z)$ to $\p\Omega_\delta$ to $f$, viewed as $L^2$ functions on $\p\Omega$
as explained in the previous section, implies in turn that
\bea
\oint_{\p\Omega}f^2(z) dz=0.
\eea
On the other hand, a key motivation for the boundary condition
(\ref{boundarycondition}) is the following identity,
\bea
\label{boundarycondition1}
\Re \mathfrak{e} (f^2(z)dz)=\rho^2(z)\Re \mathfrak{e} ({dz\over\nui(z)})
=
\rho^2(z) \,ds
\eea
where $ds$ is the element of arc-length along $\p\Omega$. 
This can be seen by picking a local defining function $r(z)$ for the boundary
$\p\Omega$. Then $\nu(z)={1\over |\nabla r|}(\p_xr+i\p_y r)$,
and
\bea
{dz\over \nu}= ds+i(\p_xr\,dy-\p_yr\,dx),
\eea
which implies (\ref{boundarycondition1}).
It follows that $\rho(z)=0$ identically, as was to be shown. The argument for the injectivity
of $U_{\Omega}$
is exactly the same. 

\medskip

The proof of Theorem {2} reduces then to the proof of the
surjectivity of the operators $T_\Omega$ and $U_\Omega$. This will be done
by induction on the number $n$ of components of the boundary
$\p\Omega$ of the domain $\Omega$. 
The precise statements that we shall prove are the following.
Let
\bea
\partial\Omega=\partial_{1}\Omega\cup\ldots\cup\partial_{n}\Omega, 
\eea
where the $\partial_j \Omega$'s are the connected components of $\partial \Omega$.

For each $j\in\left\{ 1,\ldots,n\right\} $, we denote by $H^{2}\left(\Omega,\partial_{j}\Omega\right)$
the subspace of $H^{2}\left(\Omega\right)$ defined by
\[
H^{2}\left(\Omega,\partial_{j}\Omega\right):=\left\{ f\in H^{2}\left(\Omega\right):T_{\Omega}\left(f\right)|_{\partial\Omega\setminus\partial_{j}\Omega}=0\right\} .
\]
Let $T_{\Omega}^{\partial_{j}\Omega}:H^{2}\left(\Omega\right)\to L_{\mathrm{in}}^{2}\left(\partial\Omega_{j}\right)$ be
the projection onto $L_{\mathrm{in}}^{2}\left(\partial\Omega_{j}\right)$
of $T_{\Omega}$,
\bea
f\mapsto T_\Omega^{\partial_{j}\Omega}(f)
=\left(T_{\Omega}\left(f\right)\right)|_{\partial\Omega_{j}}.
\eea
The restriction of $T_\Omega^{\p_j\Omega}$
to $H^{2}\left(\Omega,\partial_{j}\Omega\right)$ will be denoted by
$T_\Omega^j$. 

\medskip

Then Part (a) of Theorem \ref{2}
is an immediate consequence of the following two lemmas,
the first being the case $n=1$, and the second the induction step
from $n$ to $n+1$:

\begin{lem}
[Simply-connected case]
\label{lem:simply-connected-case}Let $\Omega$ be a simply-connected
domain. Then the mappings $T_{\Omega}:H^{2}\left(\Omega\right)\to L_{\mathrm{in}}^{2}\left(\partial\Omega\right)$
and $U_{\Omega}:H^{2}\left(\Omega\right)\to L_{\mathrm{out}}^{2}\left(\partial\Omega\right)$
are isomorphisms.
\end{lem}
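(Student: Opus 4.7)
Since injectivity of $T_\Omega$ and $U_\Omega$ has already been established in full generality, the content of the lemma is surjectivity in the simply-connected case. My plan is to reduce the problem to the unit disk $\mathbb{D}$ via the Riemann mapping theorem, and then to solve it explicitly on $\mathbb{D}$ by Fourier series.

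First I would fix a Riemann map $\phi:\mathbb{D}\to\Omega$. Since $\partial\Omega$ is smooth, $\phi$ extends to a diffeomorphism of closures with $\phi'$ non-vanishing on $\overline{\mathbb{D}}$, so a single-valued smooth branch of $\sqrt{\phi'}$ exists on $\overline{\mathbb{D}}$. I would then verify that the ``half-differential'' pullback
\[
\mathcal{U}F \;:=\; \sqrt{\phi'}\cdot (F\circ\phi)
\]
simultaneously defines an isometric isomorphism $H^{2}(\Omega)\to H^{2}(\mathbb{D})$ and, via boundary restriction, an isometric isomorphism $L^{2}(\partial\Omega)\to L^{2}(\partial\mathbb{D})$ commuting with $R_{\partial\Omega}$. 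The essential point is that $\mathcal{U}$ intertwines $\Pin$ on $\partial\Omega$ with $\Pin$ on $\partial\mathbb{D}$ (and similarly for $\Pout$); this reduces to the transformation law $\nui^{\Omega}(\phi(z))=(\phi'(z)/|\phi'(z)|)\,\nui^{\mathbb{D}}(z)$ together with the observation that the phase factor $(\phi'/|\phi'|)^{1/2}$ contained in $\sqrt{\phi'}$ is exactly what carries the line $(\nui^{\Omega})^{-1/2}\mathbb{R}$ to $(\nui^{\mathbb{D}})^{-1/2}\mathbb{R}$. Given this compatibility, it suffices to prove the lemma for $\Omega=\mathbb{D}$.

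On the disk, everything can be made explicit. For $f_0\in L_{\mathrm{in}}^{2}(\partial\mathbb{D})$, expand $f_0(e^{i\theta})=\sum_{n\in\mathbb{Z}}c_n e^{in\theta}$; since $\nui(e^{i\theta})=-e^{i\theta}$, the condition $\Pout(f_0)=0$ translates to the Fourier symmetry $c_{-n-1}=-\overline{c_n}$ for all $n\in\mathbb{Z}$. I would then set
\[
F(z) \;:=\; 2\sum_{n\geq 0}c_n z^n,
\]
which lies in $H^{2}(\mathbb{D})$ by Parseval, and check by direct Fourier computation of $\Pin(R_{\partial\mathbb{D}}F)=\tfrac{1}{2}\bigl(R_{\partial\mathbb{D}}F+\overline{\nui}\,\overline{R_{\partial\mathbb{D}}F}\bigr)$, using the symmetry relation, that the result equals $f_0$. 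This yields surjectivity of $T_{\mathbb{D}}$. Surjectivity of $U_{\mathbb{D}}$ is obtained by the same procedure with the roles of ``in'' and ``out'' swapped (the analogous Fourier symmetry becomes $d_{-n-1}=+\overline{d_n}$).

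The main obstacle I anticipate is the conformal bookkeeping in the first step: one must verify carefully that $\sqrt{\phi'}$, rather than any purely real rescaling, is the correct pullback weight to make both the Hardy space norm and the twisted projections $\Pin$ and $\Pout$ transform compatibly under the Riemann map. Once this is nailed down, the disk case is essentially a reformulation of classical Hardy space theory.
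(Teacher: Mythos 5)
Your proposal is correct and follows essentially the same route as the paper: reduce to the disk via a Riemann map and the half-differential pullback $F\mapsto\sqrt{\phi'}\cdot(F\circ\phi)$ (this is precisely the paper's Lemma on conformal equivalence), then on $\mathbb{D}$ read the constraint $\Pout[f]=0$ as the Fourier symmetry $c_k+\overline{c_{-k-1}}=0$ and invert $T_{\mathbb{D}}$ explicitly via $F(z)=2\sum_{n\geq 0}c_n z^n$. The Fourier bookkeeping checks out, and your handling of $U_{\mathbb{D}}$ by swapping ``in'' and ``out'' is the intended parallel argument.
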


\medskip

\begin{lem}[Induction step]
\label{lem:Induction-step}
Let $n\geq 1$,
and assume that for any $n$-connected smooth domain $\Omega$, the mapping
$T_{\Omega}:H^{2}\left(\Omega\right)\to L_{\mathrm{in}}^{2}\left(\partial\Omega\right)$
is an isomorphism.  Let $\Lambda$ be any $\left(n+1\right)$-connected
smooth domain. Then the operator
$T_\Lambda: H^2(\Lambda)\to L_{\rm in}^2(\p\Lambda)$
is an isomorphism. 
\end{lem}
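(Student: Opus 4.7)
The plan is to reduce the induction step to a fixed-point problem and then invert the resulting operator via a positivity estimate that is itself built inductively. Injectivity of $T_\Lambda$ is already in hand, so only surjectivity needs to be shown. Writing $\p\Lambda = \p_1\Lambda \cup \cdots \cup \p_{n+1}\Lambda$ and correspondingly
\[
L_{\rm in}^2(\p\Lambda)=\bigoplus_{j=1}^{n+1} L_{\rm in}^2(\p_j\Lambda),
\]
the problem reduces to a \emph{single-component} statement: for every $j$ and every $g_j\in L_{\rm in}^2(\p_j\Lambda)$ there exists $F_j\in H^2(\Lambda,\p_j\Lambda)$ with $T_\Lambda^j(F_j)=g_j$. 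Granting this, the sum $\sum_j F_j$ handles any $g=\sum_j g_j\in L_{\rm in}^2(\p\Lambda)$.

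Fix $j$, pick any $k\neq j$, and let $\Omega'$ denote the $n$-connected smooth domain obtained from $\Lambda$ by filling in the hole bounded by $\p_k\Lambda$; then $\Lambda\subset\Omega'$, $\p\Omega'=\p\Lambda\setminus\p_k\Lambda$, and $\p_j\Lambda$ remains a boundary component of $\Omega'$. The inductive hypothesis applied to $\Omega'$ furnishes a unique $\tilde F\in H^2(\Omega')$ with $T_{\Omega'}(\tilde F)=g_j$ on $\p_j\Lambda$ and $T_{\Omega'}(\tilde F)=0$ on every other component of $\p\Omega'$. The restriction $F^{(1)}:=\tilde F|_\Lambda$ lies in $H^2(\Lambda)$, matches the desired $\Pin$-projection on each $\p_l\Lambda$ with $l\neq k$, and introduces an error $e^{(1)}\in L_{\rm in}^2(\p_k\Lambda)$ equal to the $\Pin$-projection of the smooth holomorphic trace of $\tilde F$ along the interior contour $\p_k\Lambda$. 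Running the same construction with $j$ and $k$ swapped produces $F^{(2)}$ that kills the error on $\p_k\Lambda$ but introduces a new error on $\p_j\Lambda$, and so on; the alternation is encoded as a bounded linear map $K$ between direct sums of the $L_{\rm in}^2(\p_l\Lambda)$ spaces, and the series $\sum_m F^{(m)}$ converges to the required $F_j$ precisely when $\mathrm{Id}-K$ is invertible.

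The main obstacle, and the heart of the induction, is proving that $\mathrm{Id}-K$ is indeed invertible: this is the \emph{positivity of a certain operator constructed inductively on the number of components} alluded to in the abstract. The identity already used for injectivity, $\Re\mathfrak{e}(f^2\,dz)=|f|^2\,ds$ for $f\in L_{\rm in}^2$ and $-|f|^2\,ds$ for $f\in L_{\rm out}^2$, combined with the vanishing $\oint f^2\,dz=0$ applied on $\Lambda$ and on the auxiliary $n$-connected domains $\Omega'$ and $\Lambda^{(j)}$, translates one step of the iteration into a quadratic identity comparing the $L^2$-norms of successive errors. Strict contraction $\|K\|<1$ is expected to follow from the fact that each error contour $\p_k\Lambda$ lies strictly in the interior of the corresponding $n$-connected auxiliary domain, so the transfer of traces picks up a definite loss; the inductive ingredient enters precisely where one compares $\Pin$- and $\Pout$-contributions on the auxiliary problem, as it is invertibility (not merely injectivity) of $T_{\Omega'}$ and $T_{\Lambda^{(j)}}$ that makes the correction step well-defined and uniformly bounded. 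Once $\mathrm{Id}-K$ is inverted by Neumann series, the iterated sum produces $F_j\in H^2(\Lambda,\p_j\Lambda)$, and summing over $j$ completes the induction.
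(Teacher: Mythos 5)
Your superposition reduction to single-component problems is correct and matches the paper's Lemma~\ref{lem:superposition}, and you correctly identify the quadratic identity $\Re\mathfrak{e}\left(f^{2}\,dz\right)=\pm|f|^{2}\,ds$ on $L_{\mathrm{in/out}}^{2}$ together with $\oint f^{2}\,dz=0$ as the engine of the argument. Organizationally, your alternating two-domain scheme (fill $\p_{k}\Lambda$, restrict, fill $\p_{j}\Lambda$, restrict, \ldots) is in the same spirit as the paper's route, which instead conformally places the distinguished component on the unit circle (Lemma~\ref{lem:Riemann-mapping-theorem}), corrects a Laurent ansatz by the inductive inverse $S_{\Omega}$ once, and packages the remainder as a single operator $\mathcal{Q}$ on $\ell_{-}^{2}\left(\mathbb{Z}\right)$ (Lemmas~\ref{lem:operator-Phi} and~\ref{lem:O}).

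The genuine gap is in your final step: you assert strict contraction $\|K\|<1$ and propose to close by Neumann series, but this is neither established nor true in general. Traces of bounded $H^{2}$ solutions on strictly interior contours are smooth, but there is no uniform $L^{2}$-norm loss, so there is no reason for your $K$ (or the paper's $\mathcal{Q}$) to be a contraction. What the contour-integral positivity actually yields is only that $\mathcal{Q}$ is positive semi-definite as a quadratic form for the real inner product $\bullet_{\mathbb{R}}$ (Lemma~\ref{lem:positive-definiteness}), i.e.\ $\left(c_{k}\right)_{k}\bullet_{\mathbb{R}}\mathcal{Q}\left(c_{k}\right)_{k}\geq 0$, with no bound on $\|\mathcal{Q}\|$. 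The paper then inverts $\mathcal{O}=\mathrm{Id}+\mathcal{Q}$ not by a geometric series but by the coercivity estimate
\[
\mathcal{O}\mathcal{O}^{T}=\mathrm{Id}+\left(\mathcal{Q}+\mathcal{Q}^{T}\right)+\mathcal{Q}\mathcal{Q}^{T}\geq\mathrm{Id},
\]
which gives a bounded inverse $\mathcal{O}^{-1}=\mathcal{O}^{T}\left(\mathcal{O}\mathcal{O}^{T}\right)^{-1}$. You need to replace ``contraction expected from interiority'' by this positivity-plus-coercivity argument; as written, the pivotal inversion of $\mathrm{Id}-K$ is unjustified. (A secondary, smaller point: when $\p_{j}\Lambda$ or $\p_{k}\Lambda$ is the outer boundary, ``filling the hole'' produces an unbounded domain, and you would need the M\"obius/inversion normalization of Lemma~\ref{lem:Riemann-mapping-theorem} to stay within the framework.)
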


\bigskip
We give now the proofs of Lemma \ref{lem:simply-connected-case} and Lemma
\ref{lem:Induction-step}. An essential ingredient is the following conformal invariance property. Its proof is straightforward,
since a conformal equivalence between two smooth domains extends to a diffeomorphism of the boundaries:

\medskip

\begin{lem}[Conformal equivalence]
\label{lem:conformal-equivalence}Let $\Lambda$ and $\Xi$ be two
conformally equivalent smooth domains and let $\psi:\Xi\to\Lambda$
be a conformal map. Let $\partial_{j}\Xi$ be a connected component
of $\partial\Xi$ and $\partial_{j}\Lambda:=\psi\left(\partial_{j}\Xi\right)$.
Then the following diagram commutes\textup{
\begin{eqnarray*}
H^{2}\left(\Lambda,\partial_{j}\Lambda\right) & \overset{T_{\Lambda}^{\partial_{j}\Lambda}}{\longrightarrow} & L_{\mathrm{in}}^{2}\left(\partial\Lambda_{j}\right)\\
\downarrow\Psi &  & \downarrow\Psi|_{L_{\mathrm{in}}^{2}\left(\partial\Xi{j}\right)}\\
H^{2}\left(\Xi,\partial_{j}\Xi\right) & \overset{T_{\Xi}^{\partial_{j}\Xi}}{\longrightarrow} & L_{\mathrm{in}}^{2}\left(\partial_{j}\Xi\right),
\end{eqnarray*}
}where the isomorphism $\Psi:H^{2}\left(\Lambda\right)\to H^{2}\left(\Xi\right)$
is defined by
\[
\left(z\mapsto f\left(z\right)\right)\mapsto\left(w\mapsto f\left(\psi_{j}\left(w\right)\right)\sqrt{\psi_{j}'\left(w\right)}\right).
\]

\end{lem}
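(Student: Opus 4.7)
The plan is to reduce Lemma \ref{lem:conformal-equivalence} to a single transformation identity on the boundary. The argument decomposes into three steps: (i) verify that $\Psi$ is a Hilbert-space isomorphism from $H^2(\Lambda)$ onto $H^2(\Xi)$; (ii) show that the subspaces $H^2(\Lambda,\p_j\Lambda)$ and $H^2(\Xi,\p_j\Xi)$ correspond under $\Psi$; (iii) read off the commutativity of the diagram from the same computation.

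For (i), the composition $f\circ\psi$ is holomorphic on $\Xi$ whenever $f$ is holomorphic on $\Lambda$, and the half-density factor $\sqrt{\psi'(w)}$ turns $\Psi$ into an $L^2$-isometry on the boundaries: the change of variable $z=\psi(w)$ gives $|dz|=|\psi'(w)|\,|dw|$ on $\p\Lambda$, so that
\[
\int_{\p\Xi}|(\Psi f)(w)|^{2}\,|dw| \;=\; \int_{\p\Lambda}|f(z)|^{2}\,|dz|.
\]
The inverse of $\Psi$ is the analogous pullback along $\psi^{-1}$, so $\Psi$ is a Hilbert-space isomorphism of $H^{2}(\Lambda)$ and $H^{2}(\Xi)$.

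For (ii) and (iii), the heart of the argument is a transformation law for the inward normal. Since $\psi$ extends to an orientation-preserving smooth diffeomorphism of the boundaries, tangents and inward normals are rotated by $\arg\psi'(w)$, giving
\[
\nui^{\Lambda}(\psi(w)) \;=\; \frac{\psi'(w)}{|\psi'(w)|}\,\nui^{\Xi}(w), \qquad w\in\p\Xi.
\]
Taking $-1/2$-th powers (with branches chosen compatibly with $\sqrt{\psi'}$, i.e., with a fixed spin structure) and multiplying by $\sqrt{\psi'(w)}$ yields the key identity
\[
\nui^{\Lambda}(\psi(w))^{-1/2}\,\sqrt{\psi'(w)} \;=\; |\psi'(w)|^{1/2}\,\nui^{\Xi}(w)^{-1/2}.
\]
Thus the twisted real line $\nui^{\Lambda}(z)^{-1/2}\mathbb{R}$ at $z=\psi(w)\in\p\Lambda$ pulls back under $\Psi$ to the twisted line $\nui^{\Xi}(w)^{-1/2}\mathbb{R}$ at $w\in\p\Xi$, with only the positive real scalar $|\psi'(w)|^{1/2}$ intervening. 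Consequently, $\Pin[R_{\p\Lambda}f]$ vanishes on a boundary component $\psi(\p_k\Xi)\subset\p\Lambda$ if and only if $\Pin[R_{\p\Xi}(\Psi f)]$ vanishes on $\p_k\Xi$. Applying this with $k\neq j$ gives $\Psi(H^2(\Lambda,\p_j\Lambda))=H^2(\Xi,\p_j\Xi)$; applying it on $\p_j\Xi$ itself shows that both compositions in the diagram send $f$ to $w\mapsto (T_{\Lambda}^{\p_j\Lambda}(f))(\psi(w))\sqrt{\psi'(w)}$ on $\p_j\Xi$, which is exactly the stated commutativity.

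The main obstacle is choosing branches of $\sqrt{\psi'}$ and of $\nui^{1/2}$ coherently on every boundary component. In the simply-connected case this is automatic; in the multiply-connected setting these choices correspond to selecting spin structures on $\Lambda$ and $\Xi$, and the lemma is to be read as asserting that the trivial spin structure on $\Lambda$ is pulled back by $\psi$ to the trivial one on $\Xi$. Once such compatible branches are fixed, the preceding identities are global and the diagram commutes.
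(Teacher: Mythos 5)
Your proposal is correct and fills in the details that the paper leaves implicit (the paper only remarks that the proof is straightforward because $\psi$ extends to a boundary diffeomorphism). The essential step you isolate — the transformation law $\nu_{\mathrm{in}}^{\Lambda}(\psi(w))=\frac{\psi'(w)}{|\psi'(w)|}\nu_{\mathrm{in}}^{\Xi}(w)$, combined with the half-density factor $\sqrt{\psi'(w)}$ so that multiplication by $\sqrt{\psi'(w)}$ carries the twisted line $\nu_{\mathrm{in}}^{\Lambda}(\psi(w))^{-1/2}\mathbb{R}$ onto $\nu_{\mathrm{in}}^{\Xi}(w)^{-1/2}\mathbb{R}$ — is exactly what makes $\mathsf{P}_{\mathrm{in}}$ intertwine with $\Psi$, since multiplication by a nonzero complex scalar preserves orthogonality in $\mathbb{C}$ and therefore commutes with the orthogonal projection onto corresponding real lines. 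This simultaneously shows $\Psi\bigl(H^{2}(\Lambda,\partial_{j}\Lambda)\bigr)=H^{2}(\Xi,\partial_{j}\Xi)$ and the commutativity of the square, matching the paper's intended argument; your closing remark on branch/spin-structure consistency likewise mirrors the paper's own comment following the lemma.
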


\medskip
We observe that the square root $\sqrt{\psi_{j}'\left(w\right)}$ is well-defined
(up to a global harmless sign) even when $\Omega$ is multiply-connected
(see \cite{H}, Chapter 4). Thus the trivial spin structure is mapped into the trivial
spin structure under global conformal transformations.

\medskip
\noindent
{\it Proof of Lemma \ref{lem:simply-connected-case}:}  
If $\Omega$ is simply-connected, then there exists a conformal
equivalence between $\Omega$ and the unit disk $\mathbb{D}$ in
$\mathbb{C}$, which extends to a diffeomorphism
between the boundary $\p\Omega$ and the unit circle $\mathbb{S}$. 
By Lemma \ref{lem:conformal-equivalence},
it suffices to prove the desired statement when $\Omega=\mathbb{D}$
and $\p\Omega=\mathbb{S}$.

Let $\psi:\mathbb{D}\to\Omega$ be a conformal mapping. By Lemma \ref{lem:conformal-equivalence}
(defining the operator $\Psi$ as in that lemma), we have the following
commuting diagram
\begin{eqnarray*}
H^{2}\left(\Omega,\partial\Omega\right) & \overset{T_{\Omega}^{\partial\Omega}}{\longrightarrow} & L_{\mathrm{in}}^{2}\left(\partial\Omega\right)\\
\downarrow\Psi &  & \downarrow\Psi|_{L_{\mathrm{in}}^{2}\left(\partial\Omega\right)}\\
H^{2}\left(\mathbb{D},\mathbb{S}\right) & \overset{T_{\mathbb{D}}^{\mathbb{S}}}{\longrightarrow} & L_{\mathrm{in}}^{2}\left(\partial\mathbb{S}\right),
\end{eqnarray*}
We should now solve the problem on the unit disk: we should show that
$T_{\mathbb{D}}^{\mathbb{S}}=T_{\mathbb{D}}$ is invertible. 

So, let us construct the inverse $S_{\mathbb{D}}$. Let $f$ be a
function in $L_{\mathrm{in}}^{2}\left(\mathbb{S}\right)$. Let $\left(c_{k}\right)_{k\in\mathbb{Z}}$
be the Fourier coefficients of $f$, so that the Fourier series of
$f$ reads $\sum_{k\in\mathbb{Z}}c_{k}e_{k}$, where $e_{k}\left(\theta\right):=e^{ik\theta}$.
By definition of $L_{\mathrm{in}}^{2}\left(\mathbb{S}\right)$, we
have that $\mathsf{P}_{\mathrm{out}}\left[f\right]=\frac{1}{2}\left(f+e_{-1}f\right)=0$.
It is hence easy to see that we have $c_{k}+\overline{c_{-k-1}}=0$
for all $k\in\mathbb{Z}$. We define $S_{\mathbb{D}}\left(f\right):=g$,
where $g\in H^{2}\left(\mathbb{D}\right)$ is defined by
\[
g\left(z\right):=2\sum_{k=0}^{\infty}c_{k}\, z^{k}.
\]
This clearly defines a bounded operator. Let us check that $T_{\mathbb{D}}\left(g\right)=f$.
The Fourier series of $T_{\mathbb{D}}\left(g\right)$ reads
\[
\sum_{k=0}^{\infty}c_{k}\, e_{k}-\sum_{k=-\infty}^{-1}\overline{c_{-k-1}}\, e_{k}.
\]
The nonnegative Fourier coefficients $T_{\mathbb{D}}\left(g\right)$
are clearly the same as the ones of $f$, and using $c_{k}+\overline{c_{-k-1}}=0$
for all $k\in\mathbb{Z}$, we get that $T_{\mathbb{D}}\left(g\right)=f$.
Using exactly the same arguments, it is easy to check that $S_{\mathbb{D}}\circ T_{\mathbb{D}}$ is the identity.

\medskip

\noindent
{\it Remark.}

It is also possible to construct $S_{\mathbb{D}}$  by writing it explicitly as a convolution kernel.
This is in spirit closer to Ising model techniques: the convolution kernel corresponds then to a fermionic
correlator.

\medskip
We turn next to the proof of Lemma \ref{lem:Induction-step}. We need two simple
observations. The first is a superposition principle, which allows to
reduce the inversion of the operator $T_\Omega$ to the inversion of
operators $T_\Omega^{\p_j\Omega}$ associated to the components $ \partial_1 \Omega, \ldots, \partial_n \Omega$
of $ \partial \Omega $.

\medskip
\begin{lem}[Superposition]
\label{lem:superposition}Let $\Xi$ be an $\left(n+1\right)$-connected
domain with $\partial\Xi=\partial_{1}\Xi\cup\ldots\cup\partial_{n+1}\Xi$.
Suppose that for each $j\in\left\{ 1,\ldots,n+1\right\} $, the restriction
$T_{\Xi}^{j}:H^{2}\left(\Xi,\partial_{j}\Xi\right)\to L_{\mathrm{in}}^{2}\left(\partial_{j}\Xi\right)$
of the operator $T_{\Xi}^{\partial_{j}\Xi}$ (originally defined on
$H^{2}\left(\Xi\right)$) is an isomorphism. For each $j\in\left\{ 1,\ldots,n+1\right\} $,
denote by $S_{\Xi}^{\partial\Xi_{j}}:L_{\mathrm{in}}^{2}\left(\partial_{j}\Xi\right)\to H^{2}\left(\Xi\right)$
the inverse of $T_{\Xi}^{\partial_j}$, injected into $H^{2}\left(\Xi\right)$
(the range of $\left(T_{\Xi}^{\partial_j}\right)^{-1}$ is contained in $H^{2}\left(\Xi,\partial_{j}\Xi\right)$).
Then we have
\begin{eqnarray*}
T_{\Xi}^{\partial_{j}\Xi}\circ S_{\Xi}^{\partial_{j}\Xi} & = & \mathrm{Id}\,\,\,\,\forall j\\
T_{\Xi}^{\partial_{j}\Xi}\circ S_{\Xi}^{\partial_{k}\Xi} & = & 0\,\,\,\,\forall j\neq k,
\end{eqnarray*}
and $T_{\Xi}$ is invertible, with inverse $S_{\Xi}:=S_{\Xi}^{\partial_{1}\Xi}\oplus\ldots\oplus S_{\Xi}^{\partial_{j+1}\Xi}$
in the decomposition $L^{2}\left(\partial\Xi\right)=L^{2}\left(\partial_{1}\Xi\right)\oplus\ldots\oplus L^{2}\left(\partial_{n+1}\Xi\right)$. 
\end{lem}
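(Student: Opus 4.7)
The plan is to treat this as a bookkeeping lemma: once the $n{+}1$ local inverses $S_\Xi^{\partial_j\Xi}$ are available, I would check the two composition identities directly from the definitions, assemble $S_\Xi$ as a direct sum, and then invoke the injectivity of $T_\Xi$ already proved at the beginning of Section 3 to upgrade the right inverse to a full inverse.

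First I would verify the diagonal identity $T_\Xi^{\partial_j\Xi}\circ S_\Xi^{\partial_j\Xi}=\mathrm{Id}$. By hypothesis, $S_\Xi^{\partial_j\Xi}$ is the inverse of the restricted operator $T_\Xi^{j}:H^2(\Xi,\partial_j\Xi)\to L_{\mathrm{in}}^2(\partial_j\Xi)$, and on the subspace $H^2(\Xi,\partial_j\Xi)$ where $S_\Xi^{\partial_j\Xi}$ takes its values, the operators $T_\Xi^{\partial_j\Xi}$ and $T_\Xi^{j}$ coincide; hence the composition is the identity. For the off-diagonal identity $T_\Xi^{\partial_j\Xi}\circ S_\Xi^{\partial_k\Xi}=0$ with $j\neq k$, I would read off the defining property of $H^2(\Xi,\partial_k\Xi)$: any element $F$ of that subspace has $T_\Xi(F)$ vanishing on every component other than $\partial_k\Xi$, and applying this to $F=S_\Xi^{\partial_k\Xi}(f)$ and restricting to $\partial_j\Xi$ gives zero.

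Next I would assemble the candidate inverse. Decomposing $f\in L_{\mathrm{in}}^2(\partial\Xi)$ along the splitting $L_{\mathrm{in}}^2(\partial\Xi)=\bigoplus_j L_{\mathrm{in}}^2(\partial_j\Xi)$ as $f=f_1+\ldots+f_{n+1}$, I would set $S_\Xi(f):=\sum_j S_\Xi^{\partial_j\Xi}(f_j)$. The restriction of $T_\Xi(S_\Xi(f))$ to $\partial_j\Xi$ is then $\sum_k T_\Xi^{\partial_j\Xi}\circ S_\Xi^{\partial_k\Xi}(f_k)$, and by the two identities only the diagonal term survives and equals $f_j$. Thus $T_\Xi\circ S_\Xi=\mathrm{Id}$, so $T_\Xi$ is surjective with a bounded right inverse, and the explicit direct-sum formula claimed in the lemma holds tautologically.

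To conclude, I would invoke the injectivity of $T_\Xi$ proved at the start of Section 3; that contour-integral argument (based on $\oint_{\partial\Omega_\delta}F^2\,dz=0$ and the identity $\Re\mathfrak{e}(f^2dz)=\rho^2 ds$) does not use any hypothesis on the connectivity of the domain, so it applies verbatim to $\Xi$. An injective bounded linear map admitting a bounded right inverse is automatically a bijection, so $T_\Xi$ is an isomorphism with two-sided inverse $S_\Xi$. There is no real obstacle inside this particular lemma: the only subtlety worth flagging is the implicit claim that the range of $S_\Xi^{\partial_j\Xi}$ sits inside $H^2(\Xi,\partial_j\Xi)$, which is forced by having defined $S_\Xi^{\partial_j\Xi}$ as the inverse of the restricted operator $T_\Xi^{j}$ in the first place. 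The genuine work of the induction step is pushed into the next lemma, where the local inverses $S_\Xi^{\partial_j\Xi}$ must actually be produced.
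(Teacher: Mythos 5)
Your proof is correct, and it fills in exactly what the paper leaves implicit: the paper dismisses this lemma with a one-line ``The proof of this lemma is again straightforward,'' so there is no explicit argument in the text to compare against. Your write-up supplies the natural one: the diagonal identity follows because $T_\Xi^{\partial_j\Xi}$ and $T_\Xi^j$ agree on $H^2(\Xi,\partial_j\Xi)$, the off-diagonal identity follows from the defining vanishing condition of $H^2(\Xi,\partial_k\Xi)$, the assembled $S_\Xi$ is then a right inverse by linearity, and the injectivity of $T_\Xi$ already established at the start of Section~3 (which, as you correctly observe, is a contour-integral argument valid for any finite connectivity) upgrades this to a two-sided inverse. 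One small point worth noting, which you handle implicitly: the lemma's statement writes the decomposition as $L^2(\partial\Xi)=\bigoplus_j L^2(\partial_j\Xi)$, but what is actually used is the induced splitting $L^2_{\mathrm{in}}(\partial\Xi)=\bigoplus_j L^2_{\mathrm{in}}(\partial_j\Xi)$, which holds because the boundary components are disjoint and $\mathsf{P}_{\mathrm{in}}$ acts pointwise; you use the correct form. The remark at the end of your proposal, that the real work is deferred to the Key Lemma where the local inverses must actually be constructed, is also an accurate reading of the paper's structure.
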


The proof of this lemma is again straightforward.

\medskip

The second observation is 
the following version of the Riemann mapping theorem for multi-connected
domains:

\medskip

\begin{lem}[Riemann mapping theorem]
\label{lem:Riemann-mapping-theorem} 
Let $\Xi$ be an $n+1$-connected domain, with $n\geq 1$.
Then for any component $\p_j\Xi$ of the boundary $\p\Xi$,
there exists a conformal equivalence between $\Xi$ and 
$\Omega\setminus\overline{\mathbb{D}}$,
where $\Omega$ is an $n$-connected domain containing
the closure $\overline{\mathbb{D}}$ of the unit disk
$\mathbb{C}$, and $\p_j\Xi$ is mapped onto $\p\mathbb{D}$.
\end{lem}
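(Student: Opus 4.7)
The plan is to combine Koebe's uniformization theorem for finitely connected planar domains with a Möbius normalization, then perform a routine bookkeeping of complementary components.

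First I would invoke the classical \emph{Kreisnormierungssatz}: every smoothly bounded $(n+1)$-connected planar domain is conformally equivalent, via some $\chi:\Xi\to\Xi'$, to a \emph{circle domain} $\Xi'\subset S^{2}$ whose complement in the Riemann sphere is a disjoint union of $n+1$ closed round disks $\overline{D_{0}},\ldots,\overline{D_{n}}$. Since $\partial\Xi$ is smooth, $\chi$ extends to a diffeomorphism of boundaries, so $\chi(\partial_{j}\Xi)$ is one of the $n+1$ bounding circles, which after relabeling I may assume is $\partial D_{0}$.

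Next I would compose $\chi$ with a Möbius transformation $\mu$ of $S^{2}$ chosen so that (i) $\mu$ maps $\partial D_{0}$ to $\partial\mathbb{D}$ with $\mu(D_{0})=\mathbb{D}$, and (ii) $\mu^{-1}(\infty)$ lies in $\overline{D_{k}}$ for some $k\geq 1$. Such a $\mu$ exists by a simple degree-of-freedom count: condition (i) cuts out a three-real-parameter family (isomorphic to the automorphism group of the disk), and the orbit map $\mu\mapsto\mu^{-1}(\infty)$ sweeps out all of $S^{2}\setminus\overline{D_{0}}$, which contains $D_{k}$ for each $k\geq 1$; since $n\geq 1$, at least one such $k$ is available. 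Setting $\psi:=\mu\circ\chi$ and
\[
\Omega := S^{2}\setminus\bigcup_{k=1}^{n}\mu(\overline{D_{k}}),
\]
property (ii) forces $\Omega\subset\mathbb{C}$, disjointness of the original $\overline{D_{k}}$ forces $\overline{\mathbb{D}}\subset\Omega$ and $\Omega$ to be $n$-connected, and by construction $\psi(\Xi)=\Omega\setminus\overline{\mathbb{D}}$ with $\psi(\partial_{j}\Xi)=\partial\mathbb{D}$.

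The only substantive ingredient is the Koebe uniformization, which I would cite as a classical result; the Möbius normalization and the verification of the decomposition $\Omega\setminus\overline{\mathbb{D}}$ are elementary bookkeeping. One could in principle try to avoid Koebe by starting from the Riemann map of the simply connected complementary component $D_{j}$ bounded by $\partial_{j}\Xi$ and extending across $\partial_{j}\Xi$ by Schwarz reflection, but packaging such a local extension into a global conformal equivalence of $\Xi$ is essentially the content of Koebe's theorem, so a direct appeal seems preferable.
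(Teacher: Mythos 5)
Your proof is correct, but it takes a genuinely different route from the paper's. You invoke Koebe's \emph{Kreisnormierungssatz} to map $\Xi$ onto a full circle domain, then normalize with a M\"obius transformation; this is clean and actually yields the stronger conclusion that \emph{all} boundary components can be made round, after which the required decomposition $\Omega\setminus\overline{\mathbb{D}}$ is, as you say, bookkeeping. The paper instead uses only the simply-connected Riemann mapping theorem together with two inversions: it first inverts about a point $p_1$ in the hole $D_1$ bounded by $\partial_j\Xi$, so that $\partial_j\Xi$ becomes the outer boundary of the image and $\mathbb{C}\setminus\overline{I_{p_1}(D_1)}$ is simply connected; a Riemann map then sends that outer boundary to $\mathbb{S}$; a second inversion (followed by translation and dilation) repositions $\mathbb{S}$ as the inner unit circle. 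The paper's construction is more elementary and self-contained (no appeal to the circle-domain theorem, only to the disk case and M\"obius maps), but it produces an $\Omega$ whose remaining $n$ boundary components need not be round, which is all that is needed. Your approach imports a heavier classical theorem to shorten the argument; the trade-off is reasonable, and your M\"obius normalization step, in particular the degree-of-freedom argument guaranteeing $\mu^{-1}(\infty)\in\overline{D_k}$ for some $k\geq1$ so that $\Omega$ is a bounded planar domain, is carried out correctly. One small stylistic note: your closing remark about Schwarz reflection does not quite describe the paper's actual route (the paper uses inversion plus the disk-case Riemann map, not reflection), so you slightly overstate the necessity of Koebe if one wants to avoid it.
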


\medskip
\noindent
{\it Proof of Lemma \ref{lem:Riemann-mapping-theorem}}:
Assume first that
$\p_j\Xi$ is an inner component of the boundary of $\Sigma$.
Let $D_1$ be the connected component of $\mathbb{C}\setminus\Sigma$
enclosed by $\p_j\Xi$. 
Let $I_{p}$ be the inversion map $z\to (z-p)^{-1}$, for any point $p\in\mathbb{C}$.
Choose a point $p_1\in D_1$ and a point $p_0\in
\mathbb{C}\setminus\overline\Omega$.
If we apply $I_{p_1}$, then the domain $\Sigma$ will be mapped to
a domain $I_{p_1}(\Sigma)$ lying within $\mathbb{C}\setminus
\overline{I_{p_1}(D_1)}$.
By the Riemann mapping theorem,
there exists a conformal equivalence $\Psi$
between the simply-connected domain $\mathbb{C}\setminus \overline{I_{p_1}(D_1)}$ and
the unit disk  $\mathbb{D}$, which extends to a diffeomorphism between
$I_{p_1}(\p_1\Sigma)$ and the unit circle $\mathbb{S}$. 
Then $\Sigma$ is conformally equivalent to the domain
$I_{I_{p_0}}\Psi I_{p_1}(\Sigma)$, one of whose boundary components
is the circle $I_{I_{p_0}}\Psi I_{p_1}(\p_1\Sigma)
=I_{I_{p_0}}(\mathbb{S})$. Translating and dilating so that this last circle
is the unit circle, we obtain the desired domain $\Omega$.
When $\p_j\Xi$ is the outer component of $\p\Xi$,
we can apply an inversion $I_p$ with respect to a point outside
$\overline{\xi}$ to transform $\xi$ into another domain with
$\p_j\Xi$ transformed into an inner component of the boundary.
This reduces the problem to the case already treated, and the proof
of Lemma \ref{lem:Riemann-mapping-theorem} is complete.

\medskip
The point of the two observations Lemma \ref{lem:superposition}
and Lemma \ref{lem:Riemann-mapping-theorem} is that, in conjunction with
the conformal equivalence property, it suffices to prove that each individual operator
$T_\Omega^{\p_j\Omega}$ for each fixed $j$, $1\leq j\leq n$,
is invertible, when $\p_j\Omega$ is an inner boundary and a unit circle.
This is the content of the next lemma, which is the hardest part of our argument,
and which will be proved in the next section:

\medskip
\begin{lem}[Key Lemma]
\label{lem:localization-in-disk-complement}
Let $n\geq2$. 
Let $\Omega$
be an $n$-connected domain containing the unit disk $\mathbb{D}$.
Let $\Xi$ be the $(n+1)$-connected domain defined
by $\Omega\setminus\overline{\mathbb{D}}$.
Assume that $T_{\Omega}:H^{2}\left(\Omega\right)\to L_{\mathrm{in}}^{2}\left(\partial\Omega\right)$
is an isomorphism.  Then $T_{\Xi}^{\mathbb{S}}:H^{2}\left(\Xi,\mathbb{S}\right)\to L_{\mathrm{in}}^{2}\left(\mathbb{S}\right)$
is an isomorphism. 
\end{lem}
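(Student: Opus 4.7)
\medskip

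\noindent\emph{Plan.} The plan is to set up a Schwarz-type alternation between the inductive hypothesis on $\Omega$ and the simply-connected case applied to the exterior of the unit disk, and then close the loop by a Fredholm argument. Any $F\in H^{2}(\Xi)$ admits a Cauchy-integral decomposition $F=F_{\rm int}-F_{\rm ext}$ coming from the two components of $\p\Xi=\p\Omega\cup\mathbb{S}$: the Cauchy integral over $\p\Omega$ produces $F_{\rm int}$ holomorphic on $\Omega$, and the integral over $\mathbb{S}$ produces $F_{\rm ext}$ holomorphic on $\mathbb{D}^{c}:=(\mathbb{C}\cup\{\infty\})\setminus\overline{\mathbb{D}}$ with $F_{\rm ext}(\infty)=0$. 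Under this splitting, the condition $F\in H^{2}(\Xi,\mathbb{S})$ together with $T_{\Xi}^{\mathbb{S}}(F)=f$ becomes the coupled system
\begin{eqnarray*}
T_{\Omega}(F_{\rm int}) & = & \Pin\bigl(F_{\rm ext}\big|_{\p\Omega}\bigr),\\
T_{\mathbb{D}^{c}}(F_{\rm ext}) & = & \Pin\bigl(F_{\rm int}\big|_{\mathbb{S}}\bigr)-f,
\end{eqnarray*}
where $T_{\mathbb{D}^{c}}$ is the analogue of $T_{\Omega}$ for the simply-connected exterior $\mathbb{D}^{c}$, invertible by Lemma \ref{lem:simply-connected-case} together with Lemma \ref{lem:conformal-equivalence} via the inversion $z\mapsto 1/z$.

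Using the inductive hypothesis to invert $T_{\Omega}$ and eliminating $F_{\rm int}$ from the second equation reduces the problem to a single fixed-point equation $(I-A)F_{\rm ext}=-T_{\mathbb{D}^{c}}^{-1}(f)$ on $H^{2}(\mathbb{D}^{c})$, where
\[
A\;:=\;T_{\mathbb{D}^{c}}^{-1}\,\Pin\,\bigl(\cdot\big|_{\mathbb{S}}\bigr)\,T_{\Omega}^{-1}\,\Pin\,\bigl(\cdot\big|_{\p\Omega}\bigr)
\]
is the round-trip operator $\mathbb{D}^{c}\to\p\Omega\to\Omega\to\mathbb{S}\to\mathbb{D}^{c}$. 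Once $I-A$ is shown to be invertible, one sets $F_{\rm ext}:=-(I-A)^{-1}T_{\mathbb{D}^{c}}^{-1}(f)$, defines $F_{\rm int}$ from the first coupling equation, and checks by construction that $F:=F_{\rm int}-F_{\rm ext}$ lies in $H^{2}(\Xi,\mathbb{S})$ and satisfies $T_{\Xi}^{\mathbb{S}}(F)=f$.

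The hard step is the invertibility of $I-A$. The essential geometric input is that $\p\Omega$ and $\mathbb{S}$ are disjoint compact curves, so interior Cauchy estimates render the two restriction maps $H^{2}(\mathbb{D}^{c})\to L^{2}(\p\Omega)$ and $H^{2}(\Omega)\to L^{2}(\mathbb{S})$ compact (bounded families of holomorphic functions are equicontinuous, indeed smooth, on curves strictly inside their domain of holomorphicity); hence $A$ is compact. By the Fredholm alternative $I-A$ is an index-zero Fredholm operator, and it suffices to verify injectivity. If $(I-A)F_{\rm ext}=0$, defining $F_{\rm int}$ as above and $F:=F_{\rm int}-F_{\rm ext}\in H^{2}(\Xi,\mathbb{S})$ gives $T_{\Xi}(F)=0$ on all of $\p\Xi$, so the injectivity of $T_{\Xi}$ already established via the identity $\Re(f^{2}dz)=\rho^{2}ds$ yields $F\equiv 0$ on $\Xi$. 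Then $F_{\rm int}$ and $F_{\rm ext}$ agree on the overlap $\Xi$, so analytic continuation glues them into a single holomorphic function on $\Omega\cup\mathbb{D}^{c}=\mathbb{C}$ that vanishes at infinity; Liouville's theorem makes it identically zero, and in particular $F_{\rm ext}=0$, proving injectivity. The paper's emphasis on an inductively constructed positive operator suggests a sharper variant in which $A$ is a positive contraction in a suitable inner product, yielding $(I-A)^{-1}$ directly by a convergent Neumann series in the spirit of the Ising-model alternation; for the existence statement, however, the compactness plus Fredholm argument suffices.
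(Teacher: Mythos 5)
Your argument is correct, but it follows a genuinely different route from the paper's. The paper reduces the inversion of $T_{\Xi}^{\mathbb{S}}$ to the inversion of a Toeplitz-type operator $\mathcal{O}=\mathrm{Id}+\mathcal{Q}$ on $\ell^{2}_{-}(\mathbb{Z})$, built out of explicit ``fermionic'' spinors $\varphi_{k}^{\Re},\varphi_{k}^{\Im}=z^{k}-S_{\Omega}(\Pin[\zeta^{k}])$, etc.; the key point is that a contour-integral computation $\Re\left(\frac{1}{2\pi i}\oint f^{2}\,dz\right)\geq 0$ forces $\mathcal{Q}$ to be positive semi-definite for a real inner product, so $\mathcal{O}\mathcal{O}^{T}\geq\mathrm{Id}$ and $\mathcal{O}$ is invertible with a quantitative bound. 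Your proposal instead uses the Cauchy decomposition $F=F_{\rm int}-F_{\rm ext}$ across the two ``sides'' $\partial\Omega$ and $\mathbb{S}$, reduces to the round-trip operator $A$ on $H^{2}(\mathbb{D}^{c})$, and observes that $A$ is compact because $\partial\Omega$ and $\mathbb{S}$ are disjoint compacts; invertibility of $I-A$ then follows from the Fredholm alternative plus the injectivity of $T_{\Xi}$ (already established in Section 3), with a neat Liouville argument to pass from $F\equiv 0$ on $\Xi$ to $F_{\rm ext}\equiv 0$. Both approaches are legitimate. What the paper's positivity argument buys that yours does not is a uniform lower bound on $\mathcal{O}\mathcal{O}^{T}$ and hence an explicit operator norm bound on the inverse, independent of the geometry; your compactness argument is ``softer'' and gives no such bound. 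Conversely your argument is more geometric and requires no Fourier analysis on $\mathbb{S}$ beyond the simply-connected case.

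Two small technical points you should not leave implicit: (i) you must justify that the two pieces of the Cauchy decomposition actually land in $H^{2}(\Omega)$ and $H^{2}(\mathbb{D}^{c})$, which uses the $L^{2}$-boundedness of the Cauchy (Szeg\H{o}) projection on the smooth curve $\partial\Xi$; and (ii) the compactness of the restriction maps is correct precisely because $\overline{\mathbb{D}}\subset\Omega$ strictly, so that $\mathbb{S}$ is compactly contained in $\Omega$ and $\partial\Omega$ is compactly contained in $\mathbb{D}^{c}$ --- this is worth stating, since it is the only place where the geometric separation of the two boundary pieces enters. Your closing remark that $A$ should be a positive contraction giving a Neumann series is plausible but not automatic: the paper gets positivity of $\mathcal{Q}$, not a norm bound $\|\mathcal{Q}\|<1$, so the conclusion there comes from $\mathcal{O}\mathcal{O}^{T}\geq\mathrm{Id}$ rather than from a convergent Neumann series.
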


\section{Proof of Lemma \ref{lem:localization-in-disk-complement}}

Recall that $\Xi=\Omega\setminus\overline{\mathbb{D}}$. Our goal is to construct
an inverse to the operator $T_\Xi^{\mathbb{S}}$, using the operator
$(T_\Omega)^{-1}$ and function theory on $\mathbb{S}$.

\subsection{Function theory on $\mathbb{S}$}

It is convenient to identify $L^2({\mathbb{S}})$ with $\ell^2(\mathbb{Z})$ by the Fourier transform, $\mathcal{F}:L^{2}\left(\mathbb{S}\right)\to\ell^{2}\left(\mathbb{Z}\right)$, defined by
\[
\left(f\in L^{2}\left(\mathbb{S}\right)\right)\mapsto\left(c_{k}\left(f\right):=\frac{1}{2\pi}\int_{0}^{2\pi}e^{-ikx}f\left(x\right)\mathrm{d}x\right)
\]
We denote by $\mathcal{F}^{-1}$ the inverse of $\mathcal{F}$.

Within $\ell^2(\mathbb{Z})$, let us introduce the following real-linear subspaces,
\begin{eqnarray*}
\ell_{-}^{2}\left(\mathbb{Z}\right) & := & \left\{ \left(c_{k}\right)_{k\in\mathbb{Z}}:c_{k}=0\,\,\forall k\geq0\right\} ,\\
\ell_{+}^{2}\left(\mathbb{Z}\right) & := & \left\{ \left(c_{k}\right)_{k\in\mathbb{Z}}:c_{k}=0\,\,\forall k<0\right\} ,\\
\ell_{\mathrm{in}}^{2}\left(\mathbb{Z}\right) & := & \left\{ \left(c_{k}\right)_{k\in\mathbb{Z}}:c_{k}-\overline{c_{-1-k}}=0\,\,\forall k\right\} ,\\
\ell_{\mathrm{out}}^{2}\left(\mathbb{Z}\right) & := & \left\{ \left(c_{k}\right)_{k\in\mathbb{Z}}:c_{k}+\overline{c_{-1-k}}=0\,\,\forall k\right\} .
\end{eqnarray*}
We denote by $\mathcal{P}_{\pm}$ the orthogonal projection on $\ell_{\pm}^{2}\left(\mathbb{Z}\right)$
and by $\mathcal{P}_{\mathrm{in}}:\ell^{2}\left(\mathbb{Z}\right)\to\ell_{\mathrm{in}}^{2}\left(\mathbb{Z}\right)$
the orthogonal projection on $\ell_{\mathrm{in}}^{2}$. In coordinates:
\begin{eqnarray*}
\mathcal{P}_{+}:\left(c_{k}\right)_{k\in\mathbb{Z}} & \mapsto & \left(\mathbf{1}_{\left\{ k\geq0\right\} }c_{k}\right)_{k\in\mathbb{Z}}\\
\mathcal{P}_{-}:\left(c_{k}\right)_{k\in\mathbb{Z}} & \mapsto & \left(\mathbf{1}_{\left\{ k<0\right\} }c_{k}\right)_{k\in\mathbb{Z}}\\
\mathcal{P}_{\mathrm{in}}:\left(c_{k}\right)_{k\in\mathbb{Z}} & \mapsto & \frac{1}{2}\left(c_{k}+\overline{c_{-k-1}}\right)\\
\mathcal{P}_{\mathrm{out}}:\left(c_{k}\right)_{k\in\mathbb{Z}} & \mapsto & \frac{1}{2}\left(c_{k}-\overline{c_{-k-1}}\right)
\end{eqnarray*}
Clearly, we have
\bea
\mathcal{P}_{-}\circ2\mathcal{P}_{\mathrm{in}}=\mathrm{Id}_{\ell_{-}^{2}\left(\mathbb{Z}\right)},
\qquad
2\mathcal{P}_{\mathrm{in}}\circ\mathcal{P}_{-}=\mathrm{Id}_{\ell_{\mathrm{in}}^{2}\left(\mathbb{Z}\right)}
\eea
as well as the commuting diagram,
\[
\begin{aligned}L^{2}\left(\mathbb{S}\right) & \underset{\mathsf{P}_{\mathrm{in}}}{\longrightarrow} & L_{\mathrm{in}}^{2}\left(\mathbb{S}\right)\\
\downarrow\mathcal{F} &  & \downarrow\mathcal{F}\\
\ell^{2}\left(\mathbb{Z}\right) & \underset{\mathcal{P}_{\mathrm{in}}}{\longrightarrow} & \ell_{\mathrm{in}}^{2}\left(\mathbb{Z}\right)
\end{aligned}
\]
where 
\begin{eqnarray*}
L_{\mathrm{in}}^{2}\left(\mathbb{S}\right): & = & \left\{ f\in L^{2}\left(\mathbb{S}\right):\Im\mathfrak{m}\left(f\left(e^{i\theta}\right)e^{i\theta/2}\right)=0\quad\mbox{for almost every }\theta\right\} .
\end{eqnarray*}
(this choice of notation is made as in our case the inner normal on
$\mathbb{S}$ is actually pointing \emph{towards the exterior of the
unit disk}\emph{\noun{ }}\emph{$\mathbb{D}$}, as $\mathbb{D}$ is
in the \emph{complement} of our domain $\Xi$). 

We also need the operator $\mathcal{J}:\ell^{2}\left(\mathbb{Z}\right)\to\ell^{2}\left(\mathbb{Z}\right)$ defined by
\bea
\label{J}
\mathcal{J}:\left(c_{k}\right)_{k\in\mathbb{Z}}\mapsto\left(\overline{c_{-1-k}}\right)_{k\in\mathbb{Z}}.
\eea
which exchanges $\ell_{+}^{2}\left(\mathbb{Z}\right)$ and $\ell_{-}^{2}\left(\mathbb{Z}\right)$, i.e.,
$\mathcal{J}\left(\ell_{\pm}^{2}\left(\mathbb{Z}\right)\right)=\ell_{\mp}^{2}\left(\mathbb{Z}\right)$. Finally, we set
\[
\mathcal{F}_{+}:=\mathcal{P}_{+}\circ\mathcal{F}
\qquad
\mathcal{F}_{-}:=\mathcal{P}_{-}\circ\mathcal{F}.
\]

\subsection{The operator $\Phi:\ell_-^2(\mathbb{Z})\to H^2(\mathbb{Z},\mathbb{S})$}

We come now to the main building block of the proof, which is the operator
$\Phi:\ell_-^2(\mathbb{Z})\to H^2(\mathbb{Z},\mathbb{S})$ constructed from
the operator $S_\Omega$ (which exists by induction hypothesis) 
and Fourier series on $\ell^2(\mathbb{Z})$ as follows.

\smallskip
For each negative integer $k\in \mathbb{Z}_-$, set
\bea
\varphi_{k}^{\Re \mathfrak{e}}\left(z\right):=z^{k}-S_{\Omega}\left(\mathsf{P}_{\mathrm{in}}\left[\zeta^{k}\right]\right),
\qquad
\varphi_{k}^{\Im}\left(z\right):=iz^{k}-S_{\Omega}\left(\mathsf{P}_{\mathrm{in}}\left[i\zeta^{k}\right]\right)\left(z\right),
\eea
where $S_{\Omega}:L_{\mathrm{in}}^{2}\left(\partial\Omega\right)\to H^{2}\left(\Omega\right)$ is
the inverse of $T_{\Omega}$ (which exists by assumption). Note that, while $\zeta^k$ has a pole at $0$, $\Pin[\zeta^k]$ is a well-defined $L^2$ function
on $\p\Omega$, and hence $S_\Omega(\Pin[\zeta^k])$ is well-defined as a holomorphic function on $\Omega$.
Thus the functions $\varphi_{k}^{\Re}$ and $\varphi_{k}^{\Im}$ are the
unique holomorphic functions on $\Omega\setminus\left\{ 0\right\} $
such that
\begin{eqnarray*}
\varphi_{k}^{\Re}\left(z\right)-z^{k}\mbox{ and }\varphi_{k}^{\Im}\left(z\right)-iz^{k} & \mbox{ are holomorphic in } & \Omega\\
\mathsf{P}_{\mathrm{in}}\left[\varphi_{k}^{\Re}\left(z\right)\right]=\mathsf{P}_{\mathrm{in}}\left[\varphi_{k}^{\Im}\left(z\right)\right]=0 & \mbox{on }\partial\Omega. & \mbox{ }
\end{eqnarray*}

\begin{lem} 
\label{lem:operator-Phi} 
Define the real-linear operator $\Phi:\ell^{2}_{-} ( \mathbb{Z} ) \to H^2(\Xi, \mathbb{S})$ by
\begin{equation}
\Phi\ :\ \left(c_{k}\right)_{k\in\mathbb{Z}}\mapsto\left(\sum_{k=-\infty}^{-1}\Re\mathfrak{e}\left(c_{k}\right)\varphi_{k}^{\Re}+\Im\mathfrak{m}\left(c_{k}\right)\varphi_{k}^{\Im}\right).\label{eq:series-phi}
\end{equation}

\rm{(a)} Then the operator $\Phi$ is well-defined and bounded
as a bounded operator from $\ell^2_{-}(\mathbb{Z})$ to
$H^{2}\left(\Xi,\mathbb{S}\right)$.

\rm{(b)} The operator $\Phi$ is invertible, and 
$\Phi^{-1}:H^2 (\Xi,\mathbb{S}) \to \ell^2_{-}(\mathbb{S})$ is equal to $\mathcal{F}R_{\p\Xi}^{\mathbb{S}}$.
\end{lem}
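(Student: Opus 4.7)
My approach reduces the proof to three ingredients: a telescoping identity for $\Phi$, the fact that the Hardy space on $\mathbb{D}$ is characterized by non-negative Fourier modes on $\mathbb{S}$, and the injectivity of $T_\Omega$ from the inductive hypothesis.

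For part (a), pulling the summation through the $\mathbb{R}$-linear operators $\Pin$ and $S_\Omega$ yields the identity
\bea
\Phi((c_k)) \;=\; \sum_{k<0} c_k\, z^k \;-\; S_\Omega\!\left(\Pin\!\left[\textstyle\sum_{k<0} c_k \zeta^k\right]\right).
\eea
Since $\overline{\mathbb{D}}\subset\Omega$, there exists $\delta>0$ with $|\zeta|\geq 1+\delta$ on $\p\Omega$, so by Cauchy--Schwarz against the geometric sum $\sum_{k<0}(1+\delta)^{2k}$ the inner series converges in $L^2(\p\Omega)$ with norm $\lesssim\|(c_k)\|_{\ell^2}$. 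Applying the bounded operators $\Pin$ and (by the inductive hypothesis) $S_\Omega$ yields a function in $H^2(\Omega)$; the first summand is holomorphic on $\Xi$ with $L^2(\mathbb{S})$-norm $\sqrt{2\pi}\|(c_k)\|_{\ell^2}$ and geometrically small $L^2(\p\Omega)$-norm. Thus $\Phi$ is bounded into $H^2(\Xi)$. The image lies in $H^2(\Xi,\mathbb{S})$ because, setting $\phi:=\sum c_k\zeta^k$, one computes $\Pin[\Phi((c_k))]|_{\p\Omega}=\Pin[\phi]-T_\Omega\circ S_\Omega(\Pin[\phi])=0$, using $T_\Omega\circ S_\Omega=\mathrm{Id}$.

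For part (b), the left-inverse relation is immediate from the same identity: the second summand is holomorphic on $\Omega\supset\overline{\mathbb{D}}$, so its trace on $\mathbb{S}$ has only non-negative Fourier modes. Consequently the negative Fourier modes of $\Phi((c_k))|_\mathbb{S}$ are exactly $(c_k)_{k<0}$, giving $\mathcal{P}_-\circ\mathcal{F}\circ R_{\p\Xi}^{\mathbb{S}}\circ\Phi=\mathrm{Id}_{\ell^2_-}$ and hence injectivity of $\Phi$.

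The main obstacle is surjectivity. Given $F\in H^2(\Xi,\mathbb{S})$, set $(c_k):=\mathcal{P}_-\mathcal{F}R_{\p\Xi}^{\mathbb{S}}(F)\in\ell^2_-$ and $G:=F-\Phi((c_k))$; then $G\in H^2(\Xi,\mathbb{S})$ and, by the left-inverse identity, $G|_\mathbb{S}$ has only non-negative Fourier modes. The plan is to extend $G$ holomorphically across $\mathbb{S}$ via the Cauchy integral
\bea
\tilde G(z):=\frac{1}{2\pi i}\oint_{\p\Omega}\frac{G|_{\p\Omega}(\zeta)}{\zeta-z}\,d\zeta,\qquad z\in\Omega,
\eea
and invoke injectivity of $T_\Omega$. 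Cauchy's formula on $\Xi$ combined with the vanishing of $\oint_\mathbb{S}(\zeta-z)^{-1}G|_\mathbb{S}(\zeta)\,d\zeta$ for $|z|>1$ (expand $(\zeta-z)^{-1}=-\sum_{n\geq 0}\zeta^n z^{-n-1}$ and use that the negative Fourier modes of $G|_\mathbb{S}$ vanish) yields $\tilde G=G$ on $\Xi$. Since any $z\in\Omega$ sufficiently close to $\p\Omega$ lies in $\Xi$, the boundary traces agree, so $\tilde G\in H^2(\Omega)$ with $\tilde G|_{\p\Omega}=G|_{\p\Omega}$, whence $T_\Omega(\tilde G)=\Pin[G|_{\p\Omega}]=0$. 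The inductive hypothesis forces $\tilde G=0$, so $G=0$ and $F=\Phi((c_k))$. The delicate point is the $L^2$-boundedness of the Cauchy integral operator on the smooth curve $\p\Omega$, which ensures $\tilde G\in H^2(\Omega)$.
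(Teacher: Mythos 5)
Your proof is correct and follows essentially the same approach as the paper's: the telescoping decomposition of $\Phi$ into $\sum_{k<0}c_k z^k$ minus the $S_\Omega$-correction with its exponential decay estimate, and for invertibility the left-inverse identity $\mathcal{F}_-\circ\Phi=\mathrm{Id}$ combined with the holomorphic extension of a no-negative-modes function across $\mathbb{S}$ and the injectivity of $T_\Omega$. You spell out the extension step via the Cauchy integral, which the paper merely asserts; note that the $L^2$-boundedness of the Cauchy operator you flag as delicate is in fact not needed, since once $\tilde G=G$ on $\Xi$ one gets $\tilde G\in H^2(\Omega)$ directly from $G\in H^2(\Xi)$ because $\partial\Omega_\delta\subset\Xi$ for small $\delta$.
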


\begin{proof}[Proof of Lemma \ref{lem:operator-Phi}]
To prove Part (a), we have to show that the series defining $\Phi$
converges and is bounded in $H^2(\Xi,\mathbb{S})$ for $\left(c_{k}\right)_{k\in\mathbb{Z}}
\in \ell^2(\mathbb{Z})$. Since the boundary $\p\Omega$ of $\Omega$ lies
entirely within the region $\{|\zeta|>\rho\}$ for some fixed $\rho>1$,
the functions $\zeta^k$ decay exponentially fast for $k$ negative,
\bea
\|\zeta^k\|_{C^0(\p\Omega)}\leq \rho^{-k}.
\eea
By the assumption of Lemma \ref{lem:localization-in-disk-complement}, the operator $S_\Omega$ is bounded from $L^2(\p\Omega)$ to $H^2(\Omega)$. Thus we have
\begin{eqnarray*}
\|\left(\varphi_{k}^{\Re}-z^{k}\right)_{k<0}\|_{H^2(\Omega)} & \leq & C\rho^{-k}
\\
\|\left(\varphi_{k}^{\Im}-iz^{k}\right)_{k<0}\|_{H^2(\Omega)} & \leq & C\rho^{-k}
\end{eqnarray*}
for a constant $C$ independent of $k$. It follows that the series
\bea
\sum_{k=-\infty}^{-1}
\Re\mathfrak{e}\left(c_{k}\right)\left(\varphi_{k}^{\Re}-z^{k}\right)+\Im\mathfrak{m}\left(c_{k}\right)\left(\varphi_{k}^{\Im}-iz^{k}\right)
\eea
converges in $H^2(\Omega)$ and defines a function in $H^2(\Omega)\subset H^2(\Xi)$. Furthermore, by the Cauchy-Schwarz inequality, its $H^2(\Omega)$ norm
is bounded by
\bea
C\sum_{k=-\infty}^{-1}|c_k|\rho^{-k}
\leq
C(\sum_{k=-\infty}^{-1}|c_k|^2)^{1\over 2}(\sum_{k=-\infty}^{-1}\rho^{-2k})^{1\over 2}
\leq
C'(\sum_{k=-\infty}^{-1}|c_k|^2)^{1\over 2}.
\eea
Next, we consider the series
$\Re\mathfrak{e}\left(c_{k}\right)z^{k}+i\Im\mathfrak{m}\left(c_{k}\right)z^{k}$.
It converges uniformly on compact subsets and defines a holomorphic function
on $\Xi$. As we have seen, on the components of $\p\Xi$ which are also in $\p\Omega$, it converges exponentially fast. On the component $\mathbb{S}$
of $\p\Xi$, it is an $L^2$ function, since the functions $z^k$ form an orthonormal
system in $L^2(\mathbb{S})$,
\[
\left\Vert \sum_{k=-\infty}^{-1}\Re\mathfrak{e}\left(c_{k}\right)z^{k}+i\Im\mathfrak{m}\left(c_{k}\right)z^{k}\right\Vert _{L^2(\mathbb{S})}=\sum_{k=-\infty}^{-1}\left|c_{k}\right|^{2}<\infty.
\]
It follows that its $H^2(\Xi)$ norm is also bounded by $C\left( \sum_{k=-\infty}^{-1}|c_k|^2 \right)^{1\over 2}$. Altogether, the expression
\[
\Phi\left(\left(c_{k}\right)_{k\in\mathbb{Z}}\right)
=\sum_{k=-\infty}^{-1}\Re\mathfrak{e}\left(c_{k}\right)\left(\varphi_{k}^{\Re}-z^{k}\right)+\Im\mathfrak{m}\left(c_{k}\right)\left(\varphi_{k}^{\Im}-iz^{k}\right)+\Re\mathfrak{e}\left(c_{k}\right)z^{k}+i\Im\mathfrak{m}\left(c_{k}\right)z^{k},
\]
defines a function in $H^2(\Xi)$, whose $H^2(\Xi)$ norm
is bounded by$C\left( \sum_{k=-\infty}^{-1}|c_k|^2 \right)^{1\over 2}$. This shows that $\Phi$
is a well-defined and bounded operator, proving (a).

\smallskip

Next, we prove (b).
Checking that $\mathcal{F}_{-}\circ\Phi=\mathrm{Id}_{\ell_{-}^{2}\left(\mathbb{Z}\right)}$
is easy. To get that $\Phi\circ\mathcal{F}_{-}=\mathrm{Id}_{H^{2}\left(\Omega,\mathbb{S}\right)}$,
it just suffices to check that $\mathcal{F}_{-}$is injective. Suppose
that $f\in H^{2}\left(\Xi,\mathbb{S}\right)$ is such that $\mathcal{F}_{-}\left(f\right)$
is zero. Since it does not have any negative Fourier coefficients,
$f$ can be extended to a holomorphic function on $\Omega$. But $T_{\Omega}\left(f\right)=0$,
and hence by the injectivity of the operator $T_\Omega$
established earlier in Section 3, it follows that $f=0$.
\end{proof}

\subsection{The operators $\mathcal{O},\mathcal{Q}:\ell_-^2(\mathbb{Z})
\to \ell_-^2(\mathbb{Z})$}

Let $\mathcal{O}:\ell_-^2(\mathbb{Z})
\to \ell_-^2(\mathbb{Z})$ be the bounded operator defined by the composition
\[
\mathcal{O}:\ell_{-}^{2}\left(\mathbb{Z}\right)\underset{\Phi}{\longrightarrow}H^{2}\left(\Xi,\mathbb{S}\right)\underset{T_{\Xi}^{\mathbb{S}}}{\longrightarrow}L_{\mathrm{in}}^{2}\left(\mathbb{S}\right)\underset{\mathcal{F}_{-}}{\longrightarrow}\ell_{-}^{2}\left(\mathbb{Z}\right),
\]
Then we have the following formula:

\begin{lem}
\label{lem:O}
\rm{(a)} The operator $T_\Xi^{\mathbb{S}}$ is invertible with bounded inverse if and only if the operator $\mathcal{O}$ is invertible with bounded inverse.

\rm{(b)} The operator $\mathcal{O}$ can be expressed as
\bea
\mathcal{O}=\mathrm{Id}_{\ell_{-}^{2}\left(\mathbb{Z}\right)}+\mathcal{Q}
\eea
where $\mathcal{Q}$ is the operator on $\ell_-^2(\mathbb{Z})$ defined as the composition
\[
\mathcal{Q}:\ell_{-}^{2}\left(\mathbb{Z}\right)\underset{\Phi}{\longrightarrow}H^{2}\left(\Xi,\mathbb{S}\right)\underset{\mathcal{F}_{+}}{\longrightarrow}\ell_{+}^{2}\left(\mathbb{Z}\right)\underset{\mathcal{J}}{\longrightarrow}\ell_{-}^{2}\left(\mathbb{Z}\right),
\]
where\emph{ $\mathcal{J}$} is the operator of exchanging Fourier coefficients of positive and negative indices defined in Section 4.1.
\end{lem}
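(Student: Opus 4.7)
For part (a), my plan is to view $\mathcal{O}$ as a composition of three maps and observe that two of them are already known to be isomorphisms. The operator $\Phi:\ell_{-}^{2}(\mathbb{Z})\to H^{2}(\Xi,\mathbb{S})$ is a real-linear isomorphism by Lemma \ref{lem:operator-Phi}(b). For the rightmost factor, I would use the commuting square of Section 4.1 to identify $L_{\mathrm{in}}^{2}(\mathbb{S})$ with $\ell_{\mathrm{in}}^{2}(\mathbb{Z})$ via $\mathcal{F}$, and then invoke the identity $\mathcal{P}_{-}\circ 2\mathcal{P}_{\mathrm{in}}=\mathrm{Id}_{\ell_{-}^{2}(\mathbb{Z})}$ to conclude that $\mathcal{P}_{-}$ restricted to $\ell_{\mathrm{in}}^{2}(\mathbb{Z})$ is a real-linear isomorphism onto $\ell_{-}^{2}(\mathbb{Z})$, with bounded inverse $2\mathcal{P}_{\mathrm{in}}|_{\ell_{-}^{2}(\mathbb{Z})}$. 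Combining these, $\mathcal{F}_{-}|_{L_{\mathrm{in}}^{2}(\mathbb{S})}$ is a bounded real-linear bijection, so the three-term composition $\mathcal{O}$ is invertible with bounded inverse if and only if the middle factor $T_{\Xi}^{\mathbb{S}}$ is.

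For part (b), the plan is to unfold $\mathcal{O}(c)$ explicitly and isolate the identity piece. Fix $c\in\ell_{-}^{2}(\mathbb{Z})$, set $F:=\Phi(c)\in H^{2}(\Xi,\mathbb{S})$, and let $g:=R_{\partial\Xi}^{\mathbb{S}}(F)\in L^{2}(\mathbb{S})$ be its boundary trace. The natural splitting $\mathcal{F}(g)=\mathcal{F}_{-}(g)+\mathcal{F}_{+}(g)$ separates $g$ into its negative and non-negative frequency parts. By Lemma \ref{lem:operator-Phi}(b) we already have $\mathcal{F}_{-}(g)=c$, so the negative Fourier part of $g$ is exactly $c$; the non-negative part is by definition $\mathcal{F}_{+}(\Phi(c))$, and it is this piece that will feed into the $\mathcal{Q}$ term.

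Next I would apply $T_{\Xi}^{\mathbb{S}}=\mathsf{P}_{\mathrm{in}}\circ R_{\partial\Xi}^{\mathbb{S}}$ followed by $\mathcal{F}_{-}$. The commuting relation $\mathcal{F}\,\mathsf{P}_{\mathrm{in}}=\mathcal{P}_{\mathrm{in}}\,\mathcal{F}$ lets me rewrite $\mathcal{O}(c)=\mathcal{P}_{-}\mathcal{P}_{\mathrm{in}}\mathcal{F}(g)$, which I then evaluate on the two Fourier pieces separately. The negative-frequency piece passes through $\mathcal{P}_{-}\circ 2\mathcal{P}_{\mathrm{in}}=\mathrm{Id}_{\ell_{-}^{2}(\mathbb{Z})}$ and produces the identity summand acting on $c$. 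The non-negative-frequency piece reaches $\ell_{-}^{2}(\mathbb{Z})$ only through the index reflection $k\mapsto -k-1$ together with the complex conjugation that sit inside $\mathcal{P}_{\mathrm{in}}$; this reflection is exactly the operator $\mathcal{J}$ of (\ref{J}), so its contribution is $\mathcal{J}\circ\mathcal{F}_{+}\circ\Phi=\mathcal{Q}$. Summing the two contributions yields the asserted decomposition. The main obstacle is purely combinatorial bookkeeping: one has to balance the factor of $\frac{1}{2}$ inside $\mathcal{P}_{\mathrm{in}}$ against the normalization encoded in $\mathcal{P}_{-}\circ 2\mathcal{P}_{\mathrm{in}}=\mathrm{Id}$ so that the identity and $\mathcal{Q}$ summands emerge with matching coefficients, but no further analytic input beyond Lemma \ref{lem:operator-Phi} and the Fourier identities of Section 4.1 is needed.
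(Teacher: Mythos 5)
Your argument for both parts follows essentially the same route as the paper: in (a) you observe that $\Phi$ and $\mathcal{F}_-$ restricted to $L_{\mathrm{in}}^2(\mathbb{S})$ are bounded bijections, so invertibility of $\mathcal{O}$ reduces to invertibility of the middle factor $T_{\Xi}^{\mathbb{S}}$; in (b) you split $\mathcal{F}(R_{\partial\Xi}^{\mathbb{S}}\Phi(c))$ into negative and non-negative frequency parts and track how each passes through $\mathcal{P}_{\mathrm{in}}$ and $\mathcal{P}_-$, exactly as the paper does with its explicit computation of $\langle e_k, g\rangle$. One caveat on the factor-of-$\frac{1}{2}$ you flag at the end: it does not actually balance out as you hope. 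With the literal definitions $\Pin[f]=\frac{1}{2}(f+\overline{\nu_{\mathrm{in}}}\overline f)$ and $\mathcal{F}_-=\mathcal{P}_-\circ\mathcal{F}$ the computation gives $\mathcal{O}=\frac{1}{2}(\mathrm{Id}+\mathcal{Q})$, not $\mathrm{Id}+\mathcal{Q}$; the paper itself silently drops the same $\frac12$ in passing from $g=\frac12(f+e_{-1}\bar f)$ to $\langle e_k,g\rangle=\langle e_k,f\rangle+\langle e_k,e_{-1}\bar f\rangle$. This constant is harmless for the subsequent positivity and invertibility argument, but neither your proof nor the paper's literally produces the stated normalization, so if you intend to cite the identity as written you should either renormalize $\mathcal{Q}$ or note that only $\mathcal{O}=c(\mathrm{Id}+\mathcal{Q})$ for some $c>0$ is used.
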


\begin{proof}[Proof of Lemma \ref{lem:O}] Part (a) follows immediately from
the fact that the other operators besides $T_\Xi^{\mathbb{S}}$ in the composition
defining the operator $\mathcal{O}$ are invertible with bounded inverses.

To prove Part (b),
let $\left(c_{k}\right)_{k\in\mathbb{Z}}\in\ell_{-}^{2}\left(\mathbb{Z}\right)$.
Set $f:=\Phi\left(\left(c_{k}\right)_{k\in\mathbb{Z}}\right)\in H^{2}\left(\Omega, \mathbb{S}\right)$, i.e.,
\[
f\left(z\right)=\sum_{k=-\infty}^{-1}\Re\mathfrak{e}\left(c_{k}\right)\varphi_{k}^{\Re}\left(z\right)+\Im\mathfrak{m}\left(c_{k}\right)\varphi_{k}^{\Im}\left(z\right).
\]
Set $g:=T_{\Xi}^{\mathbb{S}}\left(f\right)\in L_{\mathrm{in}}^{2}\left(\mathbb{S}\right)$. Thus
\begin{eqnarray*}
g\left(e^{i\theta}\right) & = & \frac{1}{2}\left(f\left(e^{i\theta}\right)+e^{-i\theta}\overline{f\left(e^{i\theta}\right)}\right)
\end{eqnarray*}
We compute the Fourier coefficients of $g$ with negative indices. Set $e_{k}\left(z\right):=z^{k}$
(defined on $\mathbb{S}$) and $\left\langle f,g\right\rangle :=\int_{0}^{2\pi}\overline{f\left(e^{i\theta}\right)}g\left(e^{i\theta}\right)\mathrm{d}\theta$.
We have
\begin{eqnarray*}
\left\langle e_{k},g\right\rangle  & = & \left\langle e_{k},f\right\rangle +\left\langle e_{k},e_{-1}\overline{f}\right\rangle \\
 & = & \left\langle e_{k},f\right\rangle +\left\langle e_{k+1},\overline{f}\right\rangle \\
 & = & \left\langle e_{k},f\right\rangle +\overline{\left\langle e_{-k-1},f\right\rangle }\\
 & = & \left\langle e_{k},\sum_{j=-\infty}^{-1}\Re\mathfrak{e}\left(c_{j}\right)\varphi_{j}^{\Re}+\Im\mathfrak{m}\left(c_{k}\right)\varphi_{j}^{\Im}\right\rangle +\overline{\left\langle e_{-k-1},f\right\rangle }\\
 & = & \sum_{j=-\infty}^{-1}\left(\Re\mathfrak{e}\left(c_{j}\right)\left\langle e_{k},\varphi_{j}^{\Re}\right\rangle +\Im\mathfrak{m}\left(c_{k}\right)\left\langle e_{k},\varphi_{k}^{\Im}\right\rangle \right)+\overline{\left\langle e_{-k-1},f\right\rangle },\\
 & = & c_{k}+\overline{\left\langle e_{-k-1},f\right\rangle }
\end{eqnarray*}
where in the before last equation, we use that $\varphi_{j}^{\Re}-z^{j}$
and $\varphi_{j}^{\Im}-iz^{j}$ are regular in the unit disk (and
hence have no negative Fourier coefficients). From there we get
$\mathcal{F}\circ T_{\Xi}^{\mathbb{S}}\circ\Phi\left(c_{k}\right)=c_{k}+\mathcal{Q}c_{k}$,
which is the desired result.
\end{proof}

We arrive now at the key property of the operator $\mathcal{Q}$, which is 
perhaps surprising in itself: 

\begin{lem}[Positive definiteness]
\label{lem:positive-definiteness}
The operator $\mathcal{Q}:\ell_-^2(\mathbb{Z})\to\ell_-^2(\mathbb{Z})$
is positive semi-definite 
with respect to the following real inner-product $\bullet_{\mathbb{R}}$ on $\ell_-^2(\mathbb{Z})$,
\[
\left(a_{k}\right)_{k}\bullet_{\mathbb{R}}\left(b_{k}\right)_{k}:=\sum_{k\in\mathbb{Z}}\Re\mathfrak{e}\left(a_{k}\right)\Re\mathfrak{e}\left(b_{k}\right)+\Im\mathfrak{m}\left(a_{k}\right)\Im\mathfrak{m}\left(b_{k}\right).
\]
\end{lem}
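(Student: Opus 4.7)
My strategy is to identify the real number $c\bullet_{\mathbb{R}}\mathcal{Q}(c)$, up to a positive constant, with $\int_{\p\Omega}\rho^{2}\,ds$, where $\rho$ is the real ``density'' produced on $\p\Omega$ by the boundary condition $\Pin[\Phi(c)]=0$. The connecting identity is Cauchy's theorem on $\Xi=\Omega\setminus\overline{\mathbb{D}}$: the $(-1)$-Fourier coefficient of $\Phi(c)^{2}|_{\mathbb{S}}$ equals a contour integral along $\mathbb{S}$, which Cauchy's theorem lets us move to $\p\Omega$, where the boundary condition makes the integrand manifestly positive. In spirit this is the quantitative analogue of the Section 3 injectivity argument, which ran exactly the same identity in the degenerate case $\oint f^{2}\,dz=0$ to deduce $\rho\equiv 0$.

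For the first step, given $c\in\ell^{2}_{-}(\mathbb{Z})$, set $f:=\Phi(c)\in H^{2}(\Xi,\mathbb{S})$. By Lemma \ref{lem:operator-Phi}(b), the Fourier coefficients of $f|_{\mathbb{S}}$ of negative index are exactly the $c_{k}$; let $b_{j}:=(\mathcal{F}f)_{j}$, $j\geq 0$, denote the remaining ones. Unwinding $\mathcal{Q}=\mathcal{J}\circ\mathcal{F}_{+}\circ\Phi$ gives $\mathcal{Q}(c)_{k}=\overline{b_{-1-k}}$ for $k<0$, so, using the general identity $\Re\mathfrak{e}(a)\Re\mathfrak{e}(b)+\Im\mathfrak{m}(a)\Im\mathfrak{m}(b)=\Re\mathfrak{e}(\overline{a}b)$,
\[
c\bullet_{\mathbb{R}}\mathcal{Q}(c)=\sum_{j\geq 0}\Re\mathfrak{e}\bigl(c_{-1-j}\,b_{j}\bigr)=\tfrac{1}{2}\Re\mathfrak{e}\bigl((f^{2})_{-1}\bigr),
\]
the last equality coming from expanding $f^{2}$ on $\mathbb{S}$ and observing that the only pairs $(k,l)$ with $k+l=-1$ have one index negative and the other nonnegative, so they pair up as $\{c_{-1-j}b_{j}\}_{j\geq 0}$ and its symmetric copy. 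Since $\oint_{\mathbb{S}}f^{2}\,dz=2\pi i(f^{2})_{-1}$, this collapses to $c\bullet_{\mathbb{R}}\mathcal{Q}(c)=\tfrac{1}{4\pi}\Im\mathfrak{m}\oint_{\mathbb{S}}f^{2}\,dz$.

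For the remaining two steps, holomorphy of $f$ on $\Xi$ together with Cauchy's theorem (orienting $\p\Omega$ positively and $\mathbb{S}$ as an inner boundary of $\Xi$) yields $\oint_{\mathbb{S},\mathrm{ccw}}f^{2}\,dz=\oint_{\p\Omega}f^{2}\,dz$. On $\p\Omega$ the membership $f\in H^{2}(\Xi,\mathbb{S})$ forces $\Pin[f]=0$, so $f=\rho\,\nuo^{-1/2}$ for a real-valued $\rho$, exactly as in the injectivity argument; combining this with the elementary identity $dz/\nuo=i\,ds$ on $\p\Omega$ (derived from a local defining function $r$ and the vanishing of $dr|_{\p\Omega}$) produces $f^{2}\,dz=i\rho^{2}\,ds$ and hence $\oint_{\p\Omega}f^{2}\,dz=i\int_{\p\Omega}\rho^{2}\,ds$. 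Assembling the three ingredients gives
\[
c\bullet_{\mathbb{R}}\mathcal{Q}(c)=\frac{1}{4\pi}\int_{\p\Omega}\rho^{2}\,ds\geq 0,
\]
which is the claimed positive semi-definiteness.

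The only nontrivial bookkeeping I anticipate is keeping the orientations of $\p\Omega$ and $\mathbb{S}$ consistent and tracking the factors of $i$ that decide whether the positive content lives in the real or the imaginary part of the various contour integrals. There is no deeper analytic obstacle: all the hard analysis was absorbed into the construction of $\Phi$ in Lemma \ref{lem:operator-Phi}, and the positivity of $\mathcal{Q}$ is then a purely algebraic consequence of Cauchy's theorem plus the definition of $\Pin$.
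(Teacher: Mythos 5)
Your proposal is correct and follows essentially the same route as the paper: identify $c\bullet_{\mathbb{R}}\mathcal{Q}(c)$ with (a constant times) the real part of $\frac{1}{2\pi i}\oint_{\mathbb{S}}f^2\,dz$, deform the contour to $\partial\Omega$ using holomorphy on $\Xi$, and invoke the boundary condition $\Pin[f]=0$ to render the integrand on $\partial\Omega$ a nonnegative multiple of arc-length. Your bookkeeping (the decomposition $(f^2)_{-1}=2\sum_{j\geq 0}b_jc_{-1-j}$, the identity $dz=i\nu_{\mathrm{out}}\,ds$, and the resulting $f^2\,dz=i\rho^2\,ds$) is in fact a bit more careful than the paper's own display, which compresses the same computation rather loosely.
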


\begin{proof}[Proof of Lemma \ref{lem:positive-definiteness}]
Let $\left(c_{k}\right)_{k\in\mathbb{Z}}\in\ell_{-}^{2}\left(\mathbb{Z}\right)$
and set $f:=\Phi\left(\left(c_{k}\right)_{k}\right)\in H^{2}\left(\Xi,\mathbb{S}\right)$. Then, from the proof of Lemma \ref{lem:O}, we get:
\begin{eqnarray*}
\left(c_{k}\right)_{k}\bullet_{\mathbb{R}}\left(\mathcal{Q}\left(c_{k}\right)_{k}\right) & = & \sum_{k=-\infty}^{-1}\Re\mathfrak{e}\left(c_{k}\right)\Re\mathfrak{e}\left(\overline{\left\langle e_{-k-1},f\right\rangle }\right)+\Im\mathfrak{m}\left(c_{k}\right)\Im\mathfrak{m}\left(\overline{\left\langle e_{-k-1},f\right\rangle }\right)\\
 & = & \sum_{k=-\infty}^{-1}\Re\mathfrak{e}\left(\left\langle e_{k},f\right\rangle \right)\Re\mathfrak{e}\left(\left\langle e_{-k-1},f\right\rangle \right)-\Im\mathfrak{m}\left(\left\langle e_{k},f\right\rangle \right)\Im\mathfrak{m}\left(\left\langle e_{-k-1},f\right\rangle \right)
\end{eqnarray*}
On the other hand, by Fourier analysis, the conterclockwise-oriented
integral of $f^{2}$ on $\mathbb{S}$ gives 
\begin{eqnarray*}
\Re\mathfrak{e}\left(\frac{1}{2\pi i}\oint_{\mathbb{S}}f^{2}\left(z\right)\mathrm{d}z\right) & = & \left\langle e_{-1},\left(\sum_{k=-\infty}^{-1}\left\langle e_{k},f\right\rangle e_{k}\right)^{2}\right\rangle \\
 & = & \sum_{k=-\infty}^{-1}\Re\mathfrak{e}\left(\left\langle e_{k},f\right\rangle \left\langle e_{-k-1},f\right\rangle \right)\\
 & = & \sum_{k=-\infty}^{-1}\Re\mathfrak{e}\left(\left\langle e_{k},f\right\rangle \right)\Re\mathfrak{e}\left(\left\langle e_{-k-1},f\right\rangle \right)-\Im\mathfrak{m}\left(\left\langle e_{k},f\right\rangle \right)\Im\mathfrak{m}\left(\left\langle e_{-k-1},f\right\rangle \right)\\
 & = & \left(c_{k}\right)_{k}\bullet_{\mathbb{R}}\left(\mathcal{Q}\left(c_{k}\right)_{k}\right).
\end{eqnarray*}
Because $f\in H^2(\Xi)$, we can deform the integration contour 
of $f^2(z)dz$ as in Section 3 to get
\[
\oint_{\mathbb{S}}f^{2}\left(z\right)\mathrm{d}z=\oint_{\partial\Omega}f^{2}\left(z\right)\mathrm{d}z,
\]
where the orientation of the inner components of $\partial\Omega$
is clockwise, and the orientation of the outer component of $\partial\Omega$
is counterclockwise. But, as shown in Section 3, the fact that $f(z)$
satisfies the boundary contion on $\p\Omega$ implies that
$\Re({1\over i}f^{2}\left(z\right)\nu_{\mathrm{out}})\left(z\right)\geq 0$
on $\partial\Omega$. Hence
\[
\frac{1}{2\pi i}\oint_{\partial\Omega}f^{2}\left(z\right)\mathrm{d}z\geq0.
\]
We deduce that $\left(c_{k}\right)_{k}\bullet_{\mathbb{R}}\left(\mathcal{Q}\left(c_{k}\right)_{k}\right)\geq0$. This proves the lemma.
\end{proof}

\subsection{End of proof of Lemma \ref{lem:localization-in-disk-complement}}

Finally, we can complete the proof of Lemma \ref{lem:localization-in-disk-complement}: it suffices to
consider the operator 
\begin{eqnarray*}
\mathcal{O}\mathcal{O}^{T} & = & \left(\mathrm{Id}_{\ell_{-}^{2}\left(\mathbb{Z}\right)}+\mathcal{Q}\right)\left(\mathrm{Id}_{\ell_{-}^{2}\left(\mathbb{Z}\right)}+\mathcal{Q}^{T}\right)\\
 & = & \mathrm{Id}_{\ell_{-}^{2}\left(\mathbb{Z}\right)}+\left(\mathcal{Q}+\mathcal{Q}^{T}\right)+\mathcal{Q}\mathcal{Q}^{T}
\end{eqnarray*}
which is symmetric (for the scalar product $\bullet_{\mathbb{R}}$).
Its spectrum is bounded from below by $1$. Thus 
$\mathcal{O}\mathcal{O}^{T}$ is invertible, and we can write
\[
\mathcal{O}^{-1}=\mathcal{O}^{T}\left(\mathcal{O}\mathcal{O}^{T}\right)^{-1},
\]
which is clearly a bounded operator. As noted in Lemma \ref{lem:O}, Part (a),
the invertibility of $\mathcal{O}$ is equivalent to the invertibility of
the operator $T_{\Xi}^{\mathbb{S}}$. The proof of Lemma \ref{lem:localization-in-disk-complement},
and hence of Theorem \ref{2} is complete.

\section{Intrinsic Formulation of the Ising Boundary Condition}

The Ising boundary condition \ref{boundarycondition}
can be formulated intrinsically for spinors on an arbitrary Riemann surface
$\Omega$ with smooth boundary $\p\Omega$.
Recall that a spin structure $\delta$ on $\Omega$ is a holomorphic line bundle
$L_\delta$ on $\Omega$ with $L_\delta^2=K_\Omega$,
 where $K_\Omega$
is the canonical bundle of $\Omega$, that is, the bundle of $(1,0)$-forms over $\Omega$. Spinors on $\Omega$
with respect to the spin structure are then sections of $L_\delta$.
By abuse of notation, we shall often denote spinors by
$f(z)(dz)^{1\over 2}$. 

\smallskip
Fix a spin structure $L_\delta$. Choose any metric $ds^2=g_{\bar zz}dz d\bar z$
on $\Omega$ with $z,\bar z$ as isothermal coordinates, and let
$N(z)=2\Re (\nu(z){\p\over\p z})$ be the inward-pointing unit normal.
Then we say that a section
$f(z)(dz)^{1\over 2}$ of $L_\delta$ satisfies the Ising boundary condition if
\bea
\label{boundaryconditionglobal}
f(z)\nu(z)g_{\bar zz}^{1\over 2}=\overline{f(z)} \,(\nu(z)\bar\nu(z) g_{\bar zz})^{1\over 2}.
\eea
It is easily seen that this condition is equivalent to the condition
$\Im (f(z)\nu(z)^{1\over 2})=0$. However, it is clearly intrinsic: a square
root $(g_{\bar zz})^{1\over 2}$
of a metric $g_{\bar zz}$ on the surface $X$ is a well-defined metric
on the spin bundle $L_\delta^{-1}$. As such, it is a section
of the bundle $L_\delta\otimes \bar L_{\delta}$.
The left hand side is thus a section of $L_\delta\otimes L_\delta^{-2}
\otimes (L_\delta\otimes \bar L_{\delta})=\bar L_\delta$. Since $(\nu\bar\nu g_{\bar zz})$ is a scalar, the right hand side is also a section of $\bar L_\delta$, and the equation is intrinsic. Note that it is invariant under a Weyl scaling $g_{\bar zz}
\to e^{2\sigma(z)}g_{\bar zz}$ of the metric, so it is an equation that depends only
on the complex structure of $X$.

\section{Ellipticity of the Ising Boundary Condition}

The main goal of this section
is to show that, for generic even spin structures, the Ising boundary condition defines an elliptic
boundary value problem for the Cauchy-Riemann operator $\bar\p$.
This is essentially a consequence of classic arguments
for pseudo-differential operators, and we shall be brief.
For simplicity, we assume that $\Omega$
is an open subset of a compact Riemann surface $X$,
whose boundary $\p\Omega$ is a smooth, simple closed curve
in a domain holomorphically equivalent with the coordinate chart
$D=\{z\in\mathbb{C}; |z|<2\}$. We assume also that $X$ is
equipped with a spin structure $\delta$, and we equip $\Omega$ with the induced spin structure. For generic even spin structures, the dimension of the space of holomorphic spinors on $X$ is $0$.

\smallskip
We adapt the method of multiple-layer potentials. Let $S_\delta(z,w)(dz)^{1\over 2}
\otimes (dw)^{1\over 2}\in L_\delta(z)
\otimes L_\delta(w)$
be the Szeg\"o kernel for $L_\delta$,
where $L_\delta$ is the spin bundle defined by $\delta$,
$L_\delta^2=K_X$, and $K_X$ is the line bundle
of $(1,0)$-forms on $X$. Then $S_\delta(z,w)$ has exactly one simple pole in $z$ at 
$w$ when $\delta$ is a generic even spin structure (see e.g. \cite{DP, F}). 
Using the coordinate system on the chart $D$, we can write any section
of $L_\delta$ over $\p\Omega$ as
$f(w)(dw)^{1\over 2}$.
Thus we can define the operator
\bea
\label{F}
F(z)=\left({1\over\pi i}\int_{\p\Omega}S_\delta(w,z) f(w) dw\,(dz)^{1\over 2}\right)
\eea
which maps sections of $L_\delta$ over $\p\Omega$ to sections of $L_\delta$
over $\Omega$. Clearly $F(z)$ is holomorphic on $\Omega$. 

\medskip
(a) The boundary values of the spinor $F(z)$ exist and are given by
\beastar
{\rm lim}_{z\to w_0}
F(z)=\left ({\rm lim}_{\e\to 0}
{1\over\pi i}\int_{\p\Omega_\e(w_0)}S_\delta(w,w_0)f(w)dw \, (dw_0)^{1\over 2}-f(w_0)(dw_0)^{1\over 2} \right)
\eeastar
for any $w_0\in\p\Omega$. Here $\p\Omega_\e(w_0)$ is the complement in $\p\Omega$ of the disk centered at $w_0$ and of radius $\e$ with respect to the Euclidian metric. This follows by 
the well-known Plemelj arguments (see e.g.
\cite{M}): the function $f(w)$ in (\ref{F}) can be replaced by $f(w)-f(w_0)$, since
the integral
of $S_\delta(w,z) dw$ over the contour $\p\Omega$ 
for fixed $z\in\Omega$ can be deformed to the origin of the disk $D$. 
The resulting integral over $\p\Omega$ converges when $z\to w_0$ and can be replaced by the limit of integrals over $\p\Omega_\e(w_0)$. The contribution of the term $f(w_0)$ can now be evaluated separately: the contour $\p\Omega_\e(w_0)$ can be viewed as the difference between
a closed contour $\p\tilde\Omega_\e(w_0)$
consisting of $\p\Omega_\e(w_0)$ completed by a small half-loop $C_\e(w_0)$ of radius $\e$ around $w_0$, in the exterior of $\Omega$, and the half-loop $C_\e(w_0)$ itself. The contribution over $\p\tilde\Omega_\e(w_0)$ is again $0$ by the holomorphicity of $S_\delta(w,w_0)$, while the one over $C_\e(w_0)$
can be calculated exactly in the limit $\e\to 0$, using the short-distance asymptotics of $S_\delta(z,w)=(z-w)^{-1}+O((z-w)^3)$.

\smallskip

(b) Let $H$ be the operator on $\p\Omega$ defined by the first expression
on the right hand side of (\ref{F}). Then $H$ is a pseudo-differential operator of order $0$. Its leading term maps real functions to real functions, and its symbol $\sigma(z,\xi)$, if we parametrize $\p\Omega$ by the arc-length with respect to the Euclidian metric, is given by
\bea
\label{sigma}
\sigma(z,\xi)=i({\rm sgn}\,\xi).
\eea
This statement is local. Since we can work near the diagonal $z=w$
and drop smoothing errors, we can replace $S_\delta(z,w)$ by $(z-w)^{-1}$. 
If we parametrize $\p\Omega$ by the arc-length from a fixed point $P\in\p\Omega$,
$s\to w(s)\in\p\Omega$,
we can express $H$ as a one-dimensional singular integral operator with kernel
\bea
K(s,s_0)={t(s)\over w(s)-w(s_0)}={t(s)\over (s-s_0)(t(s_0)+(s-s_0)E(s,s_0))}
\eea
where $t(s)=dw/ds$, and $E(s,s_0)$ is a smooth function. The last expression can be recognized as $(s-s_0)^{-1}$ up to a smooth kernel. Thus $H$ is, up to smoothing errors, just the classic Hilbert transform, and it is well-known that
(\ref{sigma}) is its symbol.

\smallskip
(c) The Ising boundary condition (\ref{boundaryconditionglobal}) can be interpreted
as the problem of finding $F$ with boundary values admitting
a given projection along each direction $\nu^{1\over 2}$.
To check ellipticity of this boundary value problem, we can again work locally
and restrict ourselves to the terms of leading order.
Then the complex number $\nu(z)g_{\bar zz}^{1\over 2}$
has modulus $1$, and can be expressed locally as 
$\nu(z)g_{\bar zz}^{1\over 2}=e^{2i\theta}$ for some real-function $\theta$ on $\p\Omega$. The (signed) length of the projection of a complex number $\zeta$
on the line $e^{i\theta}\mathbb{R}$ is given by
\bea
{1\over 2}e^{-i\theta}(\zeta+e^{2i\theta}\bar\zeta)
={1\over 2}(e^{-i\theta}\zeta+e^{i\theta}\bar\zeta).
\eea
Thus the ellipticity of the Ising boundary condition is just the ellipticity of the operator on real functions
\bea
f\to Mf=:{1\over 2}(e^{-i\theta}(iH-I)+e^{i\theta}(-iH-I))f
=({\rm sin}\,\theta\,H-{\rm cos}\,\theta)f.
\eea
Since the principal
symbol of the operator $M$ is $i({\rm sgn}\,\xi)\,{\rm sin}\,\theta-{\rm cos}\,\theta$, which has norm $1$, the ellipticity of $M$ follows at once.

\smallskip
(d) As a consequence, the operator $M$ admits a parametrix which
is a pseudodifferential operator of order $0$. In particular, it is bounded on
Schauder spaces and on Sobolev spaces. Thus, when the operator $T_\Omega^{-1}$ exists, it is bounded on Schauder and on Sobolev spaces.

(e) In general, the dimensions of the kernel of $M$
and of its co-range are finite-dimensional.

\section{Canonical Metrics}

The solvability of the Ising boundary condition yields a new canonical metric
for smooth multi-connected domains in ${\bf C}$.	
In the case of the trivial spin structure, this metric corresponds to the energy density one-point function of the model, with locally constant +/- boundary conditions (see \cite{H}, Chapter 7).

\smallskip
Let $\Omega\subset\mathbb{C}$ be a multi-connected domain with smooth boundary $\p\Omega$. Theorem \label{1} implies the existence and uniqueness
of the solution to the boundary value problem
\bea
\p_{\bar z}G(z,w)=\delta(z,w)\ \ {\rm in}\ \Omega,
\quad
\Im(G(\cdot,w)\sqrt\nu)=0
\eea
for any given $w\in \Omega$. We set
\bea
\ell(w)={\rm lim}_{z\to w}(G(z,w)-{1\over z-w}).
\eea
It is then easy to show, by a similar argument as in the proof of the uniqueness part
of Theorem \ref{1}, that $\ell(w)$ is always a strictly positive number. Thus
\bea
ds_\Omega^2=: \ell(w)^2 dw d\bar w
\eea
defines a metric on $\Omega$. Lemma \ref{lem:conformal-equivalence} implies that, for any conformal equivalence $\Phi$, 
\bea
ds_{\Phi(\Omega)}^2=\Phi_*(ds_\Omega^2).
\eea
In this sense, the ``Ising energy metric" $ds^2$ is a canonical metric, which is actually different from the many other canonical metrics known in the literature. This can be verified explicitly in the case of an annulus $\Omega=\{z\in\mathbb{C};
1<|z|<R\}$ for some fixed $R>1$. Then it is not difficult to verify that the Ising model
metric is given by
\bea
\ell(w)=\sum_{n\in\mathbb{Z}}{1\over 1+R^{2n+1}}|w|^{2n}
\eea
for the even spin structure (the spinors are anti-periodic when one goes around
the circles centered at $0$), and
\bea
\ell(w)=\sum_{n\in\mathbb{Z}}{1\over 1+R^{2n}}|w|^{2n+1}
\eea
for the odd spin structure (the spinors are now periodic). On the other hand, the Bergman metric $K(z)$ is given by
\bea
K(z)={1\over\pi \log R^2}|z|^{-2}
+{1\over\pi}\sum_{n\in\mathbb{Z}}{1\over( R^p|z|^2-R^{-p})^2}
\eea
while the Robin metric is given in terms of the $\theta$-function
\bea
\theta({1\over 2\pi i}\log{R\over |w|^2}+{1\over 2}|{i\over\pi}\log R),
\eea
up to a factor independent of $w$ (in fact, a Dedekind function in $R$).

\bigskip
\subsection*{Acknowledgements}
C.H. would like to thank Dmitry Chelkak and Stanislav Smirnov for introducing him to the complex analysis questions related to the Ising model and sharing many ideas about this subject, 
as well as St\'{e}phane Benoist, Julien Dub\'{e}dat, Konstantin Izyurov and Igor Krichever for interesting conversations during the preparation of this manuscript.

C.H. is supported by NSF grant DMS-1106588 and the Minerva Foundation.

D.H.P. is supported by NSF grant DMS-07-57372.

\end{document}